\theoremstyle{plain}
\newtheorem{theorem}{Theorem}[section]
\newtheorem{coro}[theorem]{Corollary}
\newtheorem{lemma}[theorem]{Lemma}
\newtheorem{defn}[theorem]{Definition}
\theoremstyle{definition}
\newtheorem{other}{}
\title{\bf The  largest spectral radius of  uniform hypertrees with a given size of matching\footnote{This work was partially supported by National Natural
Science Foundation of China (Nos. 11561032, 11571222, 11471210), the Jiangxi Science Fund for Distinguished Young Scholars (No. 20171BCB23032) and the funds of the Education Department
 of Jiangxi Province (No. GJJ150345).  }}
\author{Li Su$^{a,b}$, \, Liying Kang$^{a}$\footnote{Corresponding author. E-mail addresses: lykang@shu.edu.cn\,(L. Kang),  suli@jxnu.edu.cn\,(L. Su), lhh@mail.ustc.edu.cn\,(H. Li), efshan@i.shu.edu.cn\,(E. Shan)}, \, Honghai Li$^{b}$, \, Erfang Shan$^{c}$\\[5mm]
\small $^a$ Department of Mathematics, Shanghai University, Shanghai 200444,  China\\
\small   $^b$College of Mathematics and Information Science, Jiangxi Normal University\\
\small  Nanchang, Jiangxi 330022,  China\\
\small   $^c$School of Management, Shanghai University,
Shanghai 200444,  China}
\date{}
\begin{document}
\maketitle
\begin{abstract}
In this paper, using the theory of matching polynomial of hypertrees and ordering of hypertrees,   we  determine the  largest spectral radius of  hypertrees    with $m$  edges and given size of matching.

\vspace{3mm}

\noindent {\it MSC classification}\,: 15A18, 05C65, 05C31

\vspace{2mm}

\noindent {\it Keywords}\,: Hypergraph; Adjacency tensor; Eigenvalues; Matching polynomial;  Hypertree; Matching.

\end{abstract}

\section{Introduction}

Hypergraphs are systems of sets which are conceived as natural extension of graphs. A {\em hypergraph} $\mathcal{H}=(V(\mathcal{H}), E(\mathcal{H}))$ is a finite set $V(\mathcal{H})$ of elements, called {\em vertices}, together with a finite multiset $E(\mathcal{H})$ of subsets of $V(\mathcal{H})$, called {\em
hyperedges} or simply {\em edges}.   For a vertex $v$ in $\mathcal{H}$, let $E_v(\mathcal{H})$ (or simply $E_v$) represent the set of edges containing $v$, i.e., $E_v(\mathcal{H}) = \{e\in E \mid v\in e\}$, and the {\em degree} of  vertex $v$   is the cardinality $|E_v|$.
A hypergraph $\mathcal{H}$ is \emph{$r$-uniform}  if every edge $e\in E(\mathcal{H})$ contains precisely $r$ vertices.
 A vertex with degree one is a \textit{core vertex}, and a vertex with degree larger than one is an \textit{intersection vertex}.
 If any two edges in $\mathcal{H}$ share at most one vertex, then $\mathcal{H}$ is said to be a \emph{linear hypergraph}. In this paper we assume  that  hypergraphs are linear and $r$-uniform.

In a hypergraph $\mathcal{H}$,  two vertices $u$ and $v$ are {\em adjacent} if there is an edge $e$ of $\mathcal{H}$ such
that $\{u,v\}\subseteq e$. A vertex $v$ is said to be {\em incident} to an edge $e$ if $v\in e$.
A {\em walk} of hypergraph $\mathcal{H}$ is defined to be an alternating sequence of vertices and edges
$v_1e_1v_2e_2\cdots v_{\ell}e_{\ell}v_{\ell+1}$ satisfying that both $v_{i}$ and $v_{i+1}$ are
incident to $e_i$ for $1\leqslant i\leqslant\ell$. A walk is called a {\em path} if all vertices
and edges in the walk are distinct.
The walk is {\em  closed} if $v_{l+1}=v_1$. A closed walk is called a {\em  cycle} if all vertices and edges in the walk are distinct.
A
hypergraph $\mathcal{H}$ is called {\em connected} if for any vertices $u$, $v$, there is a path connecting
$u$ and $v$.
A hypergraph $\mathcal{H}$  is called \textit{acyclic} or a  \textit{hyperforest}  if it contains no cycle. A connected hyperforest  is called a \textit{hypertree}.

Let $\mathcal{H}=(V,E)$ be an   $r$-uniform hypergraph of order $n$ and size $m$.    \textit{A matching} of $\mathcal{H}$ is a set of pairwise
nonadjacent edges in $E$. A \textit{$k$-matching} is  a matching
consisting of $k$ edges. We denote by $m(\mathcal{H},k)$ the number of
$k$-matchings of $\mathcal{H}$. The  \textit{matching number} $\nu(\mathcal{H})$ of $\mathcal{H}$ is the maximum cardinality of a matching.

Recently, Zhang et. al ~\cite{Zhang_17} obtained the following result.

\begin{theorem}[\cite{Zhang_17}] \label{thm_Zhang_matchingpolyradius}
 $\lambda$ is a nonzero eigenvalue of a hypertree $\mathcal{H}$ with the corresponding
eigenvector $x$ having all elements nonzero if and only if it is a root of the polynomial
\begin{equation*}\label{e_matchenergy}
\varphi(\mathcal{H},x)=\sum\limits_{k=0}^{\nu(\mathcal{H})}(-1)^{k}m(\mathcal{H},k)x^{(\nu(\mathcal{H})-k)r}.
\end{equation*}
\end{theorem}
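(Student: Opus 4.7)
The natural strategy is induction on the number of edges $m$ of $\mathcal{H}$, based on two parallel recursions: a combinatorial one for the matching polynomial and an algebraic one coming from the tensor eigenequations at a pendant edge.

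For the combinatorial side, pick a pendant edge $e = \{v, u_1, \ldots, u_{r-1}\}$ in which $u_1, \ldots, u_{r-1}$ are all core vertices of $\mathcal{H}$. Splitting the $k$-matchings according to whether they contain $e$ gives $m(\mathcal{H}, k) = m(\mathcal{H} - e, k) + m(\mathcal{H} - V(e), k - 1)$, which, after accounting for the matching numbers of the smaller hypertrees, translates into the polynomial identity
\[
\varphi(\mathcal{H}, x) = \varphi(\mathcal{H} - e, x) - \varphi(\mathcal{H} - V(e), x).
\]

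For the algebraic side, assume $\lambda \neq 0$ and that $x$ has no zero entries. The eigenequations at the core vertices $u_i$, namely $\lambda x_{u_i}^{r-1} = x_v \prod_{j \neq i} x_{u_j}$ for $i = 1, \ldots, r-1$, can be multiplied together and simplified to yield
\[
\prod_{i=1}^{r-1} x_{u_i} = \Bigl(\frac{x_v}{\lambda}\Bigr)^{r-1}.
\]
Substituting this into the eigenequation at $v$ rewrites it as
\[
\sum_{e' \in E_v \setminus \{e\}} \prod_{w \in e' \setminus v} x_w = \frac{\lambda^r - 1}{\lambda^{r-1}}\, x_v^{r-1}.
\]
The base cases $m = 0$ (trivial) and $m = 1$, where multiplying all $r$ eigenequations at the unique edge forces $\lambda^r = 1$, matching the roots of $\varphi(\mathcal{H}, x) = x^r - 1$, are straightforward.

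For the inductive step, the forward direction would combine the reduced equation at $v$ with the unchanged eigenequations on the rest of $\mathcal{H} - e$ to extract a polynomial identity in $\lambda$ which, via the matching-polynomial recursion above, is equivalent to $\varphi(\mathcal{H}, \lambda) = 0$. For the converse, given a nonzero root $\lambda$, one constructs a nowhere-zero eigenvector from the leaves inward: assign nonzero values to the core vertices $u_i$ satisfying the product relation (which leaves a free choice), and extend to the rest by applying the inductive hypothesis to $\mathcal{H} - e$ and $\mathcal{H} - V(e)$, with the recursion guaranteeing consistency.

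The main obstacle is that after the pendant-edge reduction the ``effective'' eigenvalue at $v$ in the smaller hyperforest becomes $(\lambda^r - 1)/\lambda^{r-1}$, while the eigenequations at all other vertices still involve $\lambda$. Thus the restriction of $x$ is not an eigenvector of any single smaller hypergraph, so induction cannot be applied to $\mathcal{H} - e$ alone. Overcoming this requires using simultaneously both reductions, to $\mathcal{H} - e$ and to $\mathcal{H} - V(e)$, letting the matching-polynomial recursion serve as the algebraic identity that glues them together; this cross-comparison is the central technical step of the proof.
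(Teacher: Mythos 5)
The paper does not prove this theorem; it is quoted from Zhang, Kang, Shan and Bai, so your argument can only be judged on its own merits. Your local computations are correct and are indeed the right starting point: multiplying the eigenequations at the core vertices of a pendant edge $e=\{v,u_1,\dots,u_{r-1}\}$ does give $\prod_{i}x_{u_i}=(x_v/\lambda)^{r-1}$ (since all entries are nonzero), and substituting into the equation at $v$ gives the reduced relation with the factor $(\lambda^r-1)/\lambda^{r-1}$. The deletion recursion $\varphi(\mathcal{H},x)=\varphi(\mathcal{H}\setminus e,x)-\varphi(\mathcal{H}-V(e),x)$ is also the right combinatorial counterpart (with the caveat that under the normalization $x^{(\nu(\mathcal{H})-k)r}$ used in the statement the three polynomials have different degrees, so the identity only holds after inserting powers of $x$; this is harmless for nonzero roots, and is exactly why the paper switches to the exponent $n-kr$).

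The genuine gap is the one you identify in your last paragraph and then do not close, and it affects both directions. Your stated inductive hypothesis is the theorem itself for $\mathcal{H}\setminus e$ and $\mathcal{H}-V(e)$, and that hypothesis is too weak. In the forward direction, after the pendant reduction the surviving constraints are the eigenequations for $\lambda$ at every vertex of $\mathcal{H}\setminus e$ \emph{except} $v$, plus a modified equation at $v$; this system is not the eigenvalue problem of any smaller hypergraph, so an ``eigenvalue iff root of $\varphi$'' hypothesis cannot be invoked on it. In the converse direction the failure is sharper: a nonzero root $\lambda$ of $\varphi(\mathcal{H},\cdot)$ is in general a root of neither $\varphi(\mathcal{H}\setminus e,\cdot)$ nor $\varphi(\mathcal{H}-V(e),\cdot)$, so the inductive hypothesis tells you nothing about those hypergraphs at that particular $\lambda$ and hands you no eigenvector to extend; the ``recursion guaranteeing consistency'' is asserted, not proved. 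What is actually needed is a strengthened, \emph{rooted} induction: for a branch $\mathcal{B}$ rooted at $v$, prove that a nowhere-zero solution of the eigenequations at all vertices of $\mathcal{B}$ other than $v$ exists, and that for any such solution the quantity $\bigl(\sum_{e\in E_v(\mathcal{B})}\prod_{w\in e\setminus\{v\}}x_w\bigr)/x_v^{r-1}$ equals an explicit rational function of $\lambda$ built from $\varphi(\mathcal{B},\lambda)$ and $\varphi(\mathcal{B}-v,\lambda)$ (the hypergraph analogue of $\lambda-\phi(B,\lambda)/\phi(B-v,\lambda)$ for trees, with appropriate powers of $\lambda$), handling separately the degenerate case where the denominator vanishes. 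Only with this transfer identity in hand does gluing the branches at a vertex turn the eigenequation there into the statement $\varphi(\mathcal{H},\lambda)=0$. As written, your proposal is a correct plan with the central lemma missing.
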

 We define the \textit{matching polynomial} of $\mathcal{H}$ in \cite{SuKLS} as
\begin{equation*}
\varphi(\mathcal{H},x)=\sum\limits_{k\geq0}(-1)^{k}m(\mathcal{H},k)x^{n-kr}.
\end{equation*}
This definition  seems more appropriate as it guarantees that  matching polynomials of hypergraphs of the same order have the same degree and the result in Theorem~\ref{thm_Zhang_matchingpolyradius} is still valid.

In \cite{LiShaoQi16},  some transformations on hypergraphs such as  ¡°moving edges¡±  and ¡°edge-releasing¡± were introduced and the first two spectral radii of hypertrees on $n$ vertices were characterized.  Yuan et. al~\cite{yuanshaoshan-supertrees-16} further determined the first eight uniform hypertrees on $n$ vertices with the largest spectral radii. Xiao et. al \cite{XiaoWanglu-supertree-degreesequ-17}  characterized the unique  uniform hypertree with the maximum spectral radius among all  uniform hypertrees with a given degree sequence. The first two largest spectral radii of uniform hypertrees with given diameter were characterized in \cite{XiaoWangDu-supertree-diam-18}. Recently Su  et. al \cite{SuKLS} determine the first    $\lfloor\frac{d}{2}\rfloor+1$  largest spectral radii of  $r$-uniform hypertrees with size $m$ and diameter $d$

In this paper, using the theory of matching polynomial of hypertrees introduced  in \cite{SuKLS},   we  determine the largest  spectral radius of hypertrees  with $m$ edges and given size of matching.
The structure of the remaining part of the paper is as follows: In Section 2, we give some basic definitions and results for tensor and spectra of hypergraphs. Section 3 is devoted to investigating various transformations on hypertrees with given size of matching.
In  last section,  the  largest spectral radius of hypertrees with $m$ edges and given size of matching is determined.

\section{Preliminaries}

Let $\mathcal{H}=(V, E)$ be an $r$-uniform hypergraph on $n$ vertices. A \textit{partial hypergraph}  $\mathcal{H}'=(V', E')$ of  $\mathcal{H}$ is a hypergraph with $V'\subseteq V$ and $E'\subseteq E$. A \textit{proper partial hypergraph} $\mathcal{H}'$ of  $\mathcal{H}$  is partial hypergraph of $\mathcal{H}$ with $\mathcal{H}'\neq\mathcal{H}$. For a vertex subset $S\subset V$,  let   $\mathcal{H}-S=(V'',E'')$ be the partial hypergraph of $\mathcal{H}$ satisfying that $V''=V\setminus S$, and for any $e\in E$, if $e\subseteq V''$, then $e\in E''$. When $S=\{v\}$,  $\mathcal{H}-S$ is simply written as $\mathcal{H}-v$. For an edge $e=\{v_1,\ldots,v_t\}\in E(\mathcal{H})$, let $\mathcal{H}\setminus e$ stand for the partial hypergraph of $\mathcal{H}$ obtained by   deletion of the edge $e$ from $\mathcal{H}$, i.e. $\mathcal{H}\setminus e=(V, E\setminus\{e\})$, and     $\mathcal{H}-V(e)$ stand for the partial hypergraph of $\mathcal{H}- \{v_1,\ldots,v_t\}$. A \textit{partial hypergraph} induced by an edge subset $F\subseteq E$ of $\mathcal{H}$  is a hypergraph $\mathcal{H}'=(V', F)$, where  $V'=\cup_{e\in F} e$.   For two $r$-uniform hypergraphs $\mathcal{G}$ and $\mathcal{H}$ with $V(\mathcal{G})\cap V(\mathcal{H})=\emptyset$,   we use $\mathcal{G}\dot{\cup}\mathcal{H}$ to denote the disjoint union of $\mathcal{G}$ and $\mathcal{H}$. Let $a$ be a positive integer, $a\mathcal{G}$ stand for the disjoint union of $a$  copies of $\mathcal{G}$.

  An edge $e$ of $\mathcal{H}$ is called a \textit{pendent edge} if $e$ contains
exactly $r-1$ core vertices. If $e$ is not a pendent edge,  it is  called a
\textit{non-pendent edge}.
An $r$-uniform hypergraph $\mathcal{H}$  is called a \textit{hyperstar}, denoted by $S_{m}^r$, if there is a disjoint partition of the vertex
set $V$ as  $V=\{v\}\cup V_1\cup\cdots\cup V_m$ such that  $|V_1|=\cdots=|V_m|=r-1$, and $E=\{\{v\}\cup V_i\ |  i=1,\ldots,m\}$, and $v$ is the \textit{center} of $S_{m}^r$.

For positive integers $r$ and $n$, a real
{\em tensor} $\mathcal{A}=(a_{i_1i_2\cdots i_r})$ of order $r$ and dimension $n$
refers to a multidimensional array (also called {\em hypermatrix}) with entries
$a_{i_1i_2\cdots i_r}$ such that $a_{i_1i_2\cdots i_r}\in\mathbb{R}$ for
all $i_1$, $i_2$, $\ldots$, $i_r\in[n]$, where $[n]=\{1,2,\ldots,n\}$.

The following product of tensors, defined by Shao \cite{Shao-tensorproduct},  is a generalization of
the matrix product.
Let  $\mathcal{A}$ and $\mathcal{B}$ be dimension $n$, order  $r\geqslant 2$
and order $k\geqslant 1$ tensors, respectively. Define the product
$\mathcal{AB}$ to be the tensor $\mathcal{C}$ of  dimension $n$ and order
$(r-1)(k-1)+1$ with entries as
\begin{eqnarray}\label{formu1}
c_{i\alpha_1\cdots\alpha_{r-1}}=\sum_{i_2,\ldots,i_r=1}^na_{ii_2\cdots i_r}
b_{i_2\alpha_1}\cdots b_{i_r\alpha_{r-1}},
\end{eqnarray}
where $i\in [n]$, $\alpha_1,\ldots,\alpha_{r-1}\in [n]^{k-1}$.

From the above definition, if $x=(x_1,x_2,\ldots,x_n)^{\mathrm{T}}\in \mathbb{C}^n$ is a complex
column vector of dimension $n$, then  by (\ref{formu1}) $\mathcal{A}x$ is a vector in $\mathbb{C}^n$ whose $i$th component
is given by
\begin{equation*}
\label{eq:Ax equation}
(\mathcal{A}x)_i=\sum_{i_2,\ldots,i_r=1}^na_{ii_2\cdots i_r}x_{i_2}\cdots x_{i_r},~~
\mbox{for each}\, \,i\in [n].
\end{equation*}

In 2005, Qi \cite{qi05} and Lim
\cite{Lim05} independently introduced the concepts of tensor eigenvalues and the spectra of tensors.

Let $\mathcal{A}$ be an order $r$ dimension $n$ tensor, $x=(x_1,x_2,\ldots,x_n)^{\mathrm{T}}\in\mathbb{C}^n$
 a column vector of dimension $n$. If there exists a number $\lambda\in\mathbb{C}$
and a nonzero vector $x\in\mathbb{C}^{n}$ such that
\begin{equation*}
\mathcal{A}x=\lambda x^{[r-1]},
\end{equation*}
where $x^{[r-1]}$ is a vector with $i$-th entry $x^{r-1}_i$, then $\lambda$ is called an {\em eigenvalue} of $\mathcal{A}$, $x$ is called
an {\em eigenvector} of $\mathcal{A}$ corresponding to the eigenvalue $\lambda$.
The {\em spectral
radius} of $\mathcal{A}$ is the maximum modulus of the eigenvalues of $\mathcal{A}$.

In 2012, Cooper and Dutle \cite{CoopDut12} defined the
adjacency tensors for $r$-uniform hypergraphs.

\begin{defn}
[\cite{CoopDut12}]
Let $\mathcal{H}=(V, E)$ be an $r$-uniform hypergraph on $n$ vertices. The adjacency
tensor of $\mathcal{H}$ is defined as the order $r$ and dimension $n$ tensor
$\mathcal{A}(\mathcal{H})=(a_{i_1i_2\cdots i_r})$, whose $(i_1i_2\cdots i_r)$-entry is
\[
a_{i_1i_2\cdots i_r}=\begin{cases}
\frac{1}{(r-1)!}, & \text{if}~\{i_1,i_2,\ldots,i_r\}\in E,\\
0, & \text{otherwise}.
\end{cases}
\]

\end{defn}

The \textit{spectral radius of hypergraph} $\mathcal{H}$ is defined as spectral radius  of its  adjacency tensor, denoted by $\rho(\mathcal{H})$.
In \cite{Fri} the weak irreducibility of nonnegative tensors was defined. It
was proved that an $r$-uniform hypergraph $\mathcal{H}$ is connected if and only if its adjacency
tensor $\mathcal{A}(\mathcal{H})$ is weakly irreducible (see \cite{Fri} and
\cite{YANGYANG-11}). Part of the Perron-Frobenius theorem for nonnegative tensors is
stated in the following for reference.
\begin{theorem}[\cite{QiLuo-2017}]
\label{thm:Perron-Frobenius}
Let $\mathcal{A}$ be a nonnegative tensor of order $r$ and dimension $n$, where $r, n\geq2$. Then   $\rho(\mathcal{A})$ is an eigenvalue of $\mathcal{A}$ with a nonnegative eigenvector corresponding to it.
If  $\mathcal{A}$ is weakly irreducible, then $\rho(\mathcal{A})$ is a positive
 eigenvalue of $\mathcal{A}$ with a positve eigenvector ${x}$. Furthermore,  $\rho(\mathcal{A})$ is the unique eigenvalue of  $\mathcal{A}$ with a positive eigenvector, and $x$ is the unique positive eigenvector associated with $\rho(\mathcal{A})$, up to a multiplicative  constant.
\end{theorem}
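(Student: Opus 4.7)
The plan is to prove the Perron-Frobenius theorem for nonnegative tensors in three stages, following the approach pioneered by Chang, Pearson and Zhang for irreducible tensors and extended to weakly irreducible ones by Friedland, Gaubert and Han. For the first assertion (existence of $\rho(\mathcal{A})$ as an eigenvalue with a nonnegative eigenvector), I would work on the standard simplex $\Delta = \{x\in\mathbb{R}^n_+ : \sum_i x_i = 1\}$ and set up a continuous self-map whose fixed points correspond to eigenvectors. To avoid division by zero on the boundary of $\Delta$, I would first replace $\mathcal{A}$ by $\mathcal{A}_\varepsilon := \mathcal{A}+\varepsilon \mathcal{J}$ for the all-ones tensor $\mathcal{J}$, define $F_\varepsilon(x)_i$ to be the $(r-1)$st root of $(\mathcal{A}_\varepsilon x)_i$ followed by renormalization, and apply Brouwer's fixed point theorem to obtain $\mathcal{A}_\varepsilon x_\varepsilon = \lambda_\varepsilon x_\varepsilon^{[r-1]}$ with $x_\varepsilon>0$. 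Passing to a convergent subsequence as $\varepsilon\to 0$ produces a nonnegative eigenvector $x$; the Collatz-Wielandt-type identity $\rho(\mathcal{A}) = \max_{x>0}\min_{i:x_i>0}(\mathcal{A}x)_i/x_i^{r-1}$ then forces the corresponding eigenvalue to equal $\rho(\mathcal{A})$.

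For the weakly irreducible case, I would show that any nonnegative eigenvector $x$ associated with $\rho(\mathcal{A})$ must be strictly positive. If $\mathrm{supp}(x) := \{i:x_i>0\}$ were a proper subset $S\subsetneq [n]$, the eigenvalue equation $\rho(\mathcal{A})x_i^{r-1} = \sum_{i_2,\ldots,i_r} a_{ii_2\cdots i_r}x_{i_2}\cdots x_{i_r}$ evaluated at $i\in S$ would force every nonzero entry $a_{ii_2\cdots i_r}$ with $i\in S$ to have all indices $i_2,\ldots,i_r$ in $S$ as well. Reinterpreted on the directed graph used to define weak irreducibility (cf.\ \cite{Fri,YANGYANG-11}), this means no arc leaves $S$, contradicting the assumed weak irreducibility. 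Hence $x>0$, and plugging any index into the eigenvalue equation then shows $\rho(\mathcal{A})>0$.

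For uniqueness, suppose $\mathcal{A}y = \mu\, y^{[r-1]}$ for some positive vector $y$ and scalar $\mu$. Comparing $y$ with the Perron vector $x>0$ produced above, set $t=\min_i(x_i/y_i)$, so that $x - t\,y\ge 0$ with at least one zero coordinate. Using the homogeneity $\mathcal{A}(ty)=t^{r-1}\mathcal{A}y$ together with weak irreducibility to propagate the zero coordinates of $x-t\,y$ through the associated directed graph, one concludes $x - t\,y\equiv 0$; then $y$ is a scalar multiple of $x$ and $\mu=\rho(\mathcal{A})$ follows at once.

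The main obstacle will be the uniqueness step, since for $r\ge 3$ the eigenvalue equation is genuinely nonlinear and the standard linear algebra trick ``two eigenvectors with distinct eigenvalues are linearly independent'' is unavailable. The delicate point is turning the support-propagation heuristic into a rigorous equality, which amounts to showing that the zero set of $x-t\,y$ is closed under the graph associated with weak irreducibility and is therefore, by connectivity, either empty or all of $[n]$. An elegant alternative is to verify that $F(x):=(\mathcal{A}x)^{[1/(r-1)]}$ is a strict contraction on the open positive cone in Hilbert's projective metric whenever $\mathcal{A}$ is weakly irreducible, which yields existence, positivity and uniqueness of the Perron eigenvector in one stroke via the Banach fixed point theorem; establishing the strict contraction bound, however, requires its own careful case analysis.
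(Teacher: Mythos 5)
This theorem is quoted from Qi and Luo \cite{QiLuo-2017} and the paper gives no proof of it, so there is no in-paper argument to measure your sketch against; I can only assess the sketch on its own terms. The existence part (perturb to $\mathcal{A}+\varepsilon\mathcal{J}$, apply Brouwer on the simplex, let $\varepsilon\to0$, and identify the limiting eigenvalue with $\rho(\mathcal{A})$ via a Collatz--Wielandt formula) is the standard route of Chang--Pearson--Zhang and Yang--Yang \cite{YANGYANG-SIAM-10} and is fine as an outline, provided the Collatz--Wielandt identity is established independently rather than assumed.

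The positivity step, however, contains a genuine error, not merely an omitted detail. At $i\in S=\{j:x_j>0\}$ the equation $\rho(\mathcal{A})x_i^{r-1}=\sum a_{ii_2\cdots i_r}x_{i_2}\cdots x_{i_r}$ does \emph{not} force every nonzero entry $a_{ii_2\cdots i_r}$ to have all of $i_2,\ldots,i_r$ in $S$: an entry with some $i_j\notin S$ simply contributes $0$ to the sum, which is perfectly consistent with equality, so you cannot conclude that ``no arc leaves $S$.'' Worse, the statement your argument would actually establish --- that for a weakly irreducible nonnegative tensor \emph{every} nonnegative eigenvector belonging to a positive eigenvalue is strictly positive --- is false, since your argument never uses that the eigenvalue is $\rho(\mathcal{A})$. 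Take $r=3$, $n=2$, with $a_{111}=a_{122}=a_{212}=1$ and all other entries $0$: the associated digraph of \cite{Fri} is strongly connected, yet $x=(1,0)^{\mathrm{T}}$ satisfies $\mathcal{A}x=1\cdot x^{[2]}$, while the Perron root (the eigenvalue with positive eigenvector) is the real root of $\lambda^{3}=\lambda^{2}+1$, about $1.466$. A correct positivity proof must therefore exploit that the eigenvalue in question equals $\rho(\mathcal{A})$ (e.g.\ via the max--min Collatz--Wielandt characterization), or follow Friedland--Gaubert--Han in invoking the nonlinear Perron--Frobenius theory of monotone homogeneous cone maps. Your uniqueness step, by contrast, can be made rigorous precisely because there both vectors are strictly positive: at an index $i_0$ attaining $t=\min_i x_i/y_i$ one gets $\sum a_{i_0i_2\cdots i_r}\bigl(x_{i_2}\cdots x_{i_r}-(ty_{i_2})\cdots(ty_{i_r})\bigr)=0$ with every summand nonnegative, and since each factor satisfies $x_{i_j}\ge ty_{i_j}>0$, equality of the products forces equality of the factors; hence $\{i:x_i=ty_i\}$ is closed under out-arcs of the digraph and strong connectivity yields $x=ty$, with $\mu=\rho(\mathcal{A})$ following from Collatz--Wielandt. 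As written, though, the proposal leaves both critical steps unfinished, and the one it argues in the most detail is the one that breaks.
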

The unique positive eigenvector $x$ with $\sum_{i=1}^nx_i^r=1$  corresponding to $\rho(\mathcal{H})$ is called the \textit{principal eigenvector} of $\mathcal{H}$.

\begin{theorem}[\cite{YANGYANG-SIAM-10}]
\label{yang}
 Let $\mathcal{A}, \mathcal{B}$ be order $r$ and dimension $n$ nonnegative tensors, and $\mathcal{A}\not=\mathcal{B}$. If $\mathcal{B}\leq \mathcal{A}$ and $\mathcal{A}$ is weakly irreducible, then $\rho(\mathcal{A})>\rho(\mathcal{B})$.
\end{theorem}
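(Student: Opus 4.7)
The strategy is to first establish the non-strict bound $\rho(\mathcal{A})\ge\rho(\mathcal{B})$ by monotonicity and then upgrade it to a strict inequality by exploiting the weak irreducibility of $\mathcal{A}$.

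For the non-strict part, I would apply Theorem~\ref{thm:Perron-Frobenius} to $\mathcal{B}$ to obtain a nonzero nonnegative eigenvector $y$ with $\mathcal{B}y=\rho(\mathcal{B})y^{[r-1]}$. Because $\mathcal{A}\ge\mathcal{B}$ entrywise and $y\ge 0$, we have
\[
\mathcal{A}y\ \ge\ \mathcal{B}y\ =\ \rho(\mathcal{B})\,y^{[r-1]}.
\]
The Collatz--Wielandt type lower bound for nonnegative tensors---any nonzero nonnegative $z$ satisfying $\mathcal{A}z\ge\lambda z^{[r-1]}$ witnesses $\lambda\le\rho(\mathcal{A})$---then yields $\rho(\mathcal{A})\ge\rho(\mathcal{B})$.

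For the strict part, suppose for contradiction that $\rho(\mathcal{A})=\rho(\mathcal{B})=:\rho$; the display above then says that $y$ is a $\rho$-subinvariant vector for the weakly irreducible tensor $\mathcal{A}$. I would invoke the rigidity strengthening of Theorem~\ref{thm:Perron-Frobenius}: when $\mathcal{A}$ is weakly irreducible, any nonzero nonnegative $z$ with $\mathcal{A}z\ge\rho(\mathcal{A})\,z^{[r-1]}$ must in fact satisfy equality and be a scalar multiple of the positive Perron eigenvector. Applied to $y$, this forces $y>0$ and $\mathcal{A}y=\rho\,y^{[r-1]}=\mathcal{B}y$, hence $(\mathcal{A}-\mathcal{B})y=0$. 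Expanding the $i$-th coordinate,
\[
\sum_{i_2,\ldots,i_r=1}^{n}\bigl(a_{i\,i_2\cdots i_r}-b_{i\,i_2\cdots i_r}\bigr)\,y_{i_2}\cdots y_{i_r}\ =\ 0,
\]
every summand is nonnegative (since $\mathcal{A}\ge\mathcal{B}$) and every product $y_{i_2}\cdots y_{i_r}$ is strictly positive (since $y>0$). Therefore each entry of $\mathcal{A}-\mathcal{B}$ must vanish, i.e.\ $\mathcal{A}=\mathcal{B}$, contradicting the hypothesis.

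The main obstacle is the subinvariance-to-eigenvector upgrade used to conclude $y>0$ and equality. Theorem~\ref{thm:Perron-Frobenius} as stated in the paper records only the existence part of Perron--Frobenius and not this rigidity statement, so one must either cite the full Perron--Frobenius theorem for weakly irreducible nonnegative tensors or derive the refinement inline, typically via the min-max Collatz--Wielandt characterization $\rho(\mathcal{A})=\min_{x>0}\max_i(\mathcal{A}x)_i/x_i^{r-1}$ combined with the positivity of the Perron eigenvector of $\mathcal{A}$ guaranteed by weak irreducibility.
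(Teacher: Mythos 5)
Theorem~\ref{yang} is quoted from \cite{YANGYANG-SIAM-10} and the paper supplies no proof of it, so there is nothing internal to compare against; your argument is the standard one from that literature and is correct in outline. The non-strict bound via the Collatz--Wielandt max--min inequality applied to a nonnegative eigenvector of $\mathcal{B}$ is fine, and your final entrywise cancellation step is airtight once $y>0$ and $\mathcal{A}y=\mathcal{B}y$ are in hand. You correctly identify the one genuine external input: the rigidity statement that for weakly irreducible $\mathcal{A}$ any nonzero nonnegative $y$ with $\mathcal{A}y\ge\rho(\mathcal{A})\,y^{[r-1]}$ must be (a positive multiple of) the positive Perron eigenvector with equality throughout. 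This is not derivable from Theorem~\ref{thm:Perron-Frobenius} as stated in the paper, but it is exactly what \cite{YANGYANG-SIAM-10} and \cite{Fri} provide (via the min--max/max--min Collatz--Wielandt characterization for weakly irreducible tensors), so citing it closes the argument.
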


The  following result can be obtained directly from Theorem \ref{yang} and will be often used in the sequel.

\begin{theorem}\label{YANGYANG-SIAM-10}
Suppose that $\mathcal{G}$  is a uniform hypergraph,   and $\mathcal{G}'$ is a partial hypergraph of $\mathcal{G}$. Then  $\rho(\mathcal{G}')\leq\rho(\mathcal{G})$. Furthermore, if in addition $\mathcal{G}$ is connected and $\mathcal{G}'$ is a proper partial hypergraph, we have  $\rho(\mathcal{G}')<\rho(\mathcal{G})$.
\end{theorem}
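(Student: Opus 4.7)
The plan is to reduce the statement to a direct application of Theorem~\ref{yang} via a zero-padding construction that aligns the two adjacency tensors to the same dimension. Since $\mathcal{G}'=(V',E')$ may satisfy $V'\subsetneq V(\mathcal{G})$, its adjacency tensor $\mathcal{A}(\mathcal{G}')$ is a priori not of the same dimension as $\mathcal{A}(\mathcal{G})$, so the hypothesis of Theorem~\ref{yang} does not apply directly.

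First I would introduce an order-$r$, dimension-$n$ tensor $\widetilde{\mathcal{A}}$ obtained from $\mathcal{A}(\mathcal{G}')$ by indexing along $V(\mathcal{G})$ and setting every entry whose index tuple contains a vertex of $V(\mathcal{G})\setminus V'$ to zero. A short lemma shows $\rho(\widetilde{\mathcal{A}})=\rho(\mathcal{A}(\mathcal{G}'))$: any nonnegative eigenvector $x$ of $\mathcal{A}(\mathcal{G}')$ for a nonzero eigenvalue $\lambda$ extends by zeros on $V(\mathcal{G})\setminus V'$ to a nonnegative eigenvector of $\widetilde{\mathcal{A}}$ for $\lambda$, and conversely, from the definition $(\widetilde{\mathcal{A}}y)_i=0$ whenever $i\notin V'$, any nonnegative eigenvector of $\widetilde{\mathcal{A}}$ for a positive eigenvalue must be supported on $V'$, hence restrict to an eigenvector of $\mathcal{A}(\mathcal{G}')$ for the same value. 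Applying Theorem~\ref{thm:Perron-Frobenius} on both sides then identifies the two spectral radii.

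Next I would observe the entrywise comparison $\widetilde{\mathcal{A}}\le \mathcal{A}(\mathcal{G})$: every nonzero entry of $\widetilde{\mathcal{A}}$ equals $1/(r-1)!$ and corresponds to an edge in $E'\subseteq E(\mathcal{G})$, where $\mathcal{A}(\mathcal{G})$ also has the entry $1/(r-1)!$. For the non-strict inequality $\rho(\mathcal{G}')\le \rho(\mathcal{G})$, one can invoke the standard monotonicity of the spectral radius for nonnegative tensors (itself an immediate consequence of the Collatz--Wielandt characterization coming from Theorem~\ref{thm:Perron-Frobenius}), or simply apply Theorem~\ref{yang} after slightly perturbing $\mathcal{A}(\mathcal{G})$ to be weakly irreducible and then passing to the limit.

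For the strict inequality, assume $\mathcal{G}$ is connected and $\mathcal{G}'$ is proper. Then $\mathcal{A}(\mathcal{G})$ is weakly irreducible by the Friedland--Gaubert--Han characterization cited in the paper. Properness forces either $V'\subsetneq V(\mathcal{G})$ or $E'\subsetneq E(\mathcal{G})$, and in either case $\widetilde{\mathcal{A}}\ne \mathcal{A}(\mathcal{G})$. Theorem~\ref{yang} applied to the pair $\widetilde{\mathcal{A}}\le\mathcal{A}(\mathcal{G})$ yields $\rho(\widetilde{\mathcal{A}})<\rho(\mathcal{A}(\mathcal{G}))$, i.e.\ $\rho(\mathcal{G}')<\rho(\mathcal{G})$. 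The only real obstacle is the padding lemma identifying $\rho(\widetilde{\mathcal{A}})$ with $\rho(\mathcal{A}(\mathcal{G}'))$; everything else is bookkeeping on top of the theorems already at our disposal.
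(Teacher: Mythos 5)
Your proof is correct and follows exactly the route the paper intends: the paper offers no written proof, asserting only that the statement "can be obtained directly from Theorem~\ref{yang}," and your zero-padding lemma plus the weak-irreducibility of $\mathcal{A}(\mathcal{G})$ for connected $\mathcal{G}$ supplies precisely the missing bookkeeping. (One negligible remark: when $E'=E(\mathcal{G})$ but $V'\subsetneq V(\mathcal{G})$ the padded tensor would in fact equal $\mathcal{A}(\mathcal{G})$, but for connected $\mathcal{G}$ this case is vacuous since every vertex lies in some edge, so your conclusion $\widetilde{\mathcal{A}}\neq\mathcal{A}(\mathcal{G})$ stands.)
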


An operation of \textit{moving edges} on hypergraphs was introduced by Li et. al in \cite{LiShaoQi16}.
Let $\mathcal{H}=(V,E)$ be a hypergraph with $u\in V$ and $e_1,\cdots,e_k\in E$, such that $u\notin e_i$ for $i=1,\cdots,k$. Suppose that $v_i\in e_i$ and write $e_i'=(e_i\setminus \{v_i\})\cup \{u\}  \  (i=1,\cdots,k)$. Let $\mathcal{H}'=(V,E')$ be the hypergraph with $E'=(E\setminus\{e_1,\cdots,e_k\})\cup \{e_1',\cdots,e_k'\}$. Then we say that $\mathcal{H}'$ is obtained from $\mathcal{H}$ by \textit{moving edges} $(e_1,\cdots,e_k)$ from $(v_1,\cdots,v_k)$ to $u$.

 \begin{theorem}[\cite{LiShaoQi16}]\label{lem-edgemoving}
 Let $\mathcal{H}$ be a connected hypergraph, $\mathcal{H}'$ be the hypergraph obtained from $\mathcal{H}$ by moving edges $(e_1,\cdots,e_k)$  from $(v_1,\cdots,v_k)$ to $u$.  If $x$ is the principal eigenvector of $\mathcal{H}$ corresponding to $\rho(\mathcal{H})$, and suppose that $x_u\ge \max_{1\leq i\leq k}\{x_{v_i}\}$, then $\rho(\mathcal{H}')>\rho(\mathcal{H})$.
\end{theorem}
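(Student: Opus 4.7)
The plan is to use the variational (Rayleigh-quotient) characterization of the spectral radius of a symmetric nonnegative tensor. For the adjacency tensor of an $r$-uniform hypergraph one has
\[
\rho(\mathcal{H}) \;=\; \max_{x\ge 0,\ \sum_i x_i^r=1}\ r\sum_{e\in E(\mathcal{H})}\prod_{v\in e} x_v,
\]
and the maximum is attained precisely at the nonnegative eigenvectors corresponding to $\rho(\mathcal{H})$. Since $\mathcal{H}$ is connected, Theorem~\ref{thm:Perron-Frobenius} guarantees that the principal eigenvector $x$ is strictly positive, which will be essential below.

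After normalizing $x$ so that $\sum_i x_i^r=1$, I would substitute $x$ into the Rayleigh expression for $\mathcal{H}'$. Because $E(\mathcal{H}')$ is obtained from $E(\mathcal{H})$ by replacing each $e_i$ with $e_i'=(e_i\setminus\{v_i\})\cup\{u\}$, the resulting lower bound factors cleanly:
\[
\rho(\mathcal{H}') \;\ge\; r\sum_{e'\in E(\mathcal{H}')}\prod_{v\in e'} x_v \;=\; \rho(\mathcal{H}) \;+\; r\sum_{i=1}^{k}(x_u-x_{v_i})\prod_{v\in e_i\setminus\{v_i\}} x_v.
\]
Every summand on the right is nonnegative by the hypothesis $x_u\ge x_{v_i}$ and the strict positivity of $x$, so $\rho(\mathcal{H}')\ge\rho(\mathcal{H})$ is immediate.

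The main obstacle is upgrading this to a strict inequality, because one may well have $x_u=x_{v_i}$ for every $i$. Here I would argue by contradiction: assume $\rho(\mathcal{H}')=\rho(\mathcal{H})$. Then both inequalities above are equalities, and in particular $x$ is a maximizer of the Rayleigh quotient for $\mathcal{A}(\mathcal{H}')$; the Lagrange-multiplier condition for such a maximum, together with the uniqueness clause of Theorem~\ref{thm:Perron-Frobenius} applied to the positive vector $x$, forces $x$ to be the principal eigenvector of $\mathcal{H}'$ as well. But comparing the eigenvalue equations at the coordinate $u$ gives $(\mathcal{A}(\mathcal{H}')x)_u=\rho(\mathcal{H})x_u^{r-1}=(\mathcal{A}(\mathcal{H})x)_u$, while the definition of the edge-moving operation (with $u\in e_i'$ and $u\notin e_i$) yields
\[
(\mathcal{A}(\mathcal{H}')x)_u - (\mathcal{A}(\mathcal{H})x)_u \;=\; \sum_{i=1}^{k}\prod_{v\in e_i\setminus\{v_i\}} x_v \;>\;0,
\]
a contradiction. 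The delicate point is the transition from ``equality in the Rayleigh bound'' to ``$x$ is an eigenvector of $\mathcal{A}(\mathcal{H}')$,'' but this is a standard consequence of the Lagrangian for the constrained optimization problem, after which the sign comparison at the single coordinate $u$ closes the argument.
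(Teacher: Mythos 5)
This theorem is imported from \cite{LiShaoQi16} and the paper states it without proof, so there is no internal argument to compare against; your proposal reconstructs the standard proof from that reference (Rayleigh-quotient lower bound at the principal eigenvector, then the Lagrange condition at a strictly positive maximizer to force the eigenvector equation for $\mathcal{A}(\mathcal{H}')$, then the sign comparison at the coordinate $u$), and it is correct. The one superfluous step is the appeal to the uniqueness clause of Theorem~\ref{thm:Perron-Frobenius} for $\mathcal{H}'$, which would require $\mathcal{H}'$ to be connected --- a property the edge-moving operation does not guarantee --- but this is harmless, since your Lagrange-multiplier argument alone already shows that $x$ satisfies $\mathcal{A}(\mathcal{H}')x=\rho(\mathcal{H})x^{[r-1]}$, which is all the final contradiction at the coordinate $u$ requires.
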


The following \textit{edge-releasing operation} on linear hypergraphs was given in \cite{LiShaoQi16}.

Let $\mathcal{H}$ be an $r$-uniform linear hypergraph,  $e$ be a non-pendent edge of $\mathcal{H}$ and $u\in e$.
  Let $e_1,e_2,\ldots,e_k$ be all  edges of $G$ adjacent to $e$ but not containing $u$, and suppose that $e_i\cap e=\{v_i\}$ for $i=1,\ldots,k$.    Let $\mathcal{H}'$ be the hypergraph obtained from $\mathcal{H}$ by moving edges $(e_{1},\cdots,e_k)$ from $(v_{1},\cdots,v_k)$ to $u$.  Then $\mathcal{H}'$ is said to be obtained by an \textit{edge-releasing operation} on $e$ at $u$.

  From the above definition we can see that if  $\mathcal{H}'$ and $\mathcal{H}''$ are the hypergraphs obtained from an $r$-uniform linear hypergraph $\mathcal{H}$ by an edge-releasing operation on some $e$ at $u$ and at $v$, respectively. Then $\mathcal{H}'$ and $\mathcal{H}''$ are isomorphic. So we simply say $\mathcal{H}'$ is obtained from $\mathcal{H}$ by an \textit{edge-releasing operation} on $e$.

Recall the ordering on hyperforests   introduced  in \cite{SuKLS}. Let $\mathcal{T}$ and  $\mathcal{T}'$ be hyperforests of $n$ vertices.  We call $\mathcal{T}'\preceq \mathcal{T}$  if $\varphi(\mathcal{T}',x)\geq \varphi(\mathcal{T},x)$ for every $x\geq \rho(\mathcal{T}')$; call $\mathcal{T}'\prec \mathcal{T}$  if  $\mathcal{T}'\preceq \mathcal{T}$ and the polynomial $\varphi(\mathcal{T}',x)-\varphi(\mathcal{T},x)$ does not vanish at the point $x=\rho(\mathcal{T}')$.  Note that $\mathcal{T}'\prec \mathcal{T}$ ($\mathcal{T}'\preceq \mathcal{T}$, resp.) implies $\rho(\mathcal{T}')< \rho(\mathcal{T})$ ($\rho(\mathcal{T}')\leq\rho(\mathcal{T})$, resp.).

We first give some useful results which were proposed in \cite{SuKLS}.

 \begin{lemma}[\cite{SuKLS}]\label{lem_subgraphmatchpoly}
If $\mathcal{T}$  is an  uniform hypertree,   and $\mathcal{T}'$ is a proper partial hypergraph of $\mathcal{T}$ with $V(\mathcal{T}')=V(\mathcal{T})$, then $\mathcal{T}'\prec\mathcal{T}$.
\end{lemma}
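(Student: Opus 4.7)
The plan is to proceed by induction on $s=|E(\mathcal{T})|-|E(\mathcal{T}')|$, the number of deleted edges, using the standard edge-deletion recursion
\[
\varphi(\mathcal{H},x) \;=\; \varphi(\mathcal{H}\setminus e,\,x)\;-\;\varphi(\mathcal{H}-V(e),\,x),
\]
which one obtains by splitting the $k$-matchings of $\mathcal{H}$ into those containing the chosen edge $e$ and those that do not. Alongside this I will need two structural facts: the multiplicativity $\varphi(\mathcal{F},x)=\prod_{i}\varphi(\mathcal{F}_{i},x)$ of the matching polynomial over the connected components $\mathcal{F}_{i}$ of a hyperforest $\mathcal{F}$; and the non-negativity $\varphi(\mathcal{F},x)\geq 0$ for all $x\geq\rho(\mathcal{F})$. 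The latter follows from Theorem~\ref{thm_Zhang_matchingpolyradius} applied componentwise (identifying $\rho(\mathcal{F}_{i})$ as the largest real root of $\varphi(\mathcal{F}_{i},x)$ via Perron--Frobenius and the positive leading coefficient of $\varphi$), combined with multiplicativity.

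For the base case $s=1$, write $\mathcal{T}'=\mathcal{T}\setminus e$; the recursion immediately gives
\[
\varphi(\mathcal{T}',x)-\varphi(\mathcal{T},x) \;=\; \varphi(\mathcal{T}-V(e),\,x).
\]
Because $\mathcal{T}-V(e)$ is a partial hypergraph of the hyperforest $\mathcal{T}'$, Theorem~\ref{YANGYANG-SIAM-10} gives $\rho(\mathcal{T}-V(e))\leq\rho(\mathcal{T}')$, and the non-negativity fact then yields $\varphi(\mathcal{T}-V(e),x)\geq 0$ on $x\geq\rho(\mathcal{T}')$. For strictness at $x=\rho(\mathcal{T}')$ I examine the component structure: since $\mathcal{T}$ is a hypertree, deleting $e$ yields exactly $r$ components $T_{1},\dots,T_{r}$, each containing a single vertex $v_{i}\in V(e)$, and $\mathcal{T}-V(e)=\bigsqcup_{i}(T_{i}-v_{i})$. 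Applying the strict case of Theorem~\ref{YANGYANG-SIAM-10} to each non-trivial $T_{i}$ (a connected hypertree containing at least one edge) gives $\rho(T_{i}-v_{i})<\rho(T_{i})$; taking the componentwise maximum forces $\rho(\mathcal{T}-V(e))<\rho(\mathcal{T}')$, and hence $\varphi(\mathcal{T}-V(e),\rho(\mathcal{T}'))>0$. This establishes $\mathcal{T}'\prec\mathcal{T}$ when $s=1$.

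For the inductive step $s\geq 2$, fix any $e\in E(\mathcal{T})\setminus E(\mathcal{T}')$, set $\mathcal{T}_{1}=\mathcal{T}\setminus e$, and aim to chain the base-case comparison $\mathcal{T}_{1}\prec\mathcal{T}$ with the inductive hypothesis $\mathcal{T}'\prec\mathcal{T}_{1}$ (which requires first extending the induction so that the ambient object may be a hyperforest, obtained by running the same base-case argument componentwise on the hypertree containing $e$). The main obstacle is that $\prec$ is not automatically transitive: the base-case inequality $\varphi(\mathcal{T}_{1},x)\geq\varphi(\mathcal{T},x)$ is only guaranteed on $x\geq\rho(\mathcal{T}_{1})$, whereas we need the comparison $\varphi(\mathcal{T}',x)\geq\varphi(\mathcal{T},x)$ all the way down to $x\geq\rho(\mathcal{T}')$. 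My plan to close this gap is to revisit the identity $\varphi(\mathcal{T}_{1},x)-\varphi(\mathcal{T},x)=\varphi(\mathcal{T}-V(e),x)$ and promote the non-negativity argument to the enlarged interval $x\geq\rho(\mathcal{T}')$ via a refined componentwise spectral-radius comparison between $\mathcal{T}-V(e)$ and the components of $\mathcal{T}'$, which will bridge the two chained inequalities and deliver $\mathcal{T}'\prec\mathcal{T}$.
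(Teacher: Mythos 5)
This lemma is imported from \cite{SuKLS} and the present paper contains no proof of it, so I can only judge your argument on its own terms. Your base case $s=1$ is correct and complete: the identity $\varphi(\mathcal{T}\setminus e,x)-\varphi(\mathcal{T},x)=\varphi(\mathcal{T}-V(e),x)$, the non-negativity of matching polynomials of hyperforests beyond their spectral radius, and the strict inequality $\rho(\mathcal{T}-V(e))<\rho(\mathcal{T}\setminus e)$ obtained componentwise from Theorem~\ref{YANGYANG-SIAM-10} all go through (you should separately dispose of the trivial case where $\mathcal{T}$ is a single edge, where $\mathcal{T}-V(e)$ is empty and $\varphi\equiv 1$, but that is cosmetic).

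The inductive step, however, is not a proof but an announcement of one, and the announced repair does not work as described. You correctly diagnose that $\prec$ does not chain, since $\mathcal{T}_1\prec\mathcal{T}$ only controls $\varphi(\mathcal{T}_1,x)-\varphi(\mathcal{T},x)$ on $[\rho(\mathcal{T}_1),\infty)$ while you need the comparison down to $\rho(\mathcal{T}')\leq\rho(\mathcal{T}_1)$. But the proposed bridge --- a ``refined componentwise spectral-radius comparison between $\mathcal{T}-V(e)$ and the components of $\mathcal{T}'$'' for an \emph{arbitrary} $e\in E(\mathcal{T})\setminus E(\mathcal{T}')$ --- is unavailable: $\mathcal{T}-V(e)$ still contains all the other edges that are to be deleted, so it is generally \emph{not} a partial hypergraph of $\mathcal{T}'$, and its spectral radius can strictly exceed $\rho(\mathcal{T}')$. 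For instance, let $\mathcal{T}$ be the loose path $f_1f_2\cdots f_{10}$ and $\mathcal{T}'=\mathcal{T}\setminus\{f_1,f_5\}$, whose largest component is the loose path on $5$ edges; choosing $e=f_1$ gives $\mathcal{T}-V(f_1)\supseteq\{f_3,\dots,f_{10}\}$, a loose path on $8$ edges with $\rho(\mathcal{T}-V(f_1))>\rho(\mathcal{T}')$, so the non-negativity of $\varphi(\mathcal{T}-V(e),x)$ on $[\rho(\mathcal{T}'),\infty)$ cannot be extracted from partial-hypergraph monotonicity. Closing the gap requires a genuine additional idea --- either deleting the edges in a carefully chosen order so that each intermediate derived hypergraph has spectral radius at most $\rho(\mathcal{T}')$ (in the example above, deleting $f_5$ before $f_1$ works while the reverse order does not), or abandoning the telescoping altogether in favour of an expansion such as $\varphi(\mathcal{T},x)=\sum_{M_0}(-1)^{|M_0|}\varphi(\mathcal{T}'-V(M_0),x)$ over matchings $M_0$ contained in the deleted edge set, whose terms are honest partial hypergraphs of $\mathcal{T}'$ but then carry alternating signs that must be controlled. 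Neither is supplied, so as it stands the proof is incomplete for every $s\geq 2$.
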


\begin{lemma}[\cite{SuKLS}]\label{lem-edgerelease}
Let $\mathcal{T}'$ be  an  $r$-uniform hypertree obtained by edge-releasing a non-pendent edge  of $\mathcal{T}$.  Then $\mathcal{T}'$ is a uniform hypertree and  $\mathcal{T}\prec \mathcal{T}'$.
\end{lemma}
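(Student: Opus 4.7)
My plan is to split the statement into two parts: (i) verify that $\mathcal{T}'$ remains an $r$-uniform hypertree, and (ii) establish the strict ordering $\mathcal{T} \prec \mathcal{T}'$. Part (i) is structural, while part (ii) reduces, via the edge-deletion recursion, to a polynomial inequality.

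For (i), edge-releasing replaces each $e_i$ by $e_i' = (e_i \setminus \{v_i\}) \cup \{u\}$ and leaves all other edges untouched. If two edges of $\mathcal{T}'$ shared two vertices, or if $\mathcal{T}'$ contained a cycle, then undoing the operation by substituting $v_i$ back for $u$ in each $e_i'$ would yield either a pair of edges of $\mathcal{T}$ sharing two vertices, or a cycle of $\mathcal{T}$ routed through $e$ — both forbidden since $\mathcal{T}$ is a linear hypertree. Connectedness is immediate: the underlying vertex set is unchanged, and every vertex of $\mathcal{T}'$ is still connected to $u$ through $e$.

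For (ii), the starting point is the edge-deletion recursion
\[
\varphi(\mathcal{H}, x) = \varphi(\mathcal{H} \setminus e, x) - \varphi(\mathcal{H} - V(e), x),
\]
obtained by sorting $k$-matchings according to whether they contain $e$. Applied to $\mathcal{T}$ and $\mathcal{T}'$ at the edge $e$, the key observation is that $\mathcal{T} - V(e) = \mathcal{T}' - V(e)$: removing all of $V(e)$ (which includes $u$) destroys every $e_i$ in $\mathcal{T}$ (via its vertex $v_i \in V(e)$) and every $e_i'$ in $\mathcal{T}'$ (via $u \in V(e)$), while the remaining pieces are identical in the two hypergraphs. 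Consequently
\[
\varphi(\mathcal{T}, x) - \varphi(\mathcal{T}', x) = \varphi(\mathcal{T} \setminus e, x) - \varphi(\mathcal{T}' \setminus e, x).
\]
Writing $B_i$ for the branch of $\mathcal{T} \setminus e$ at $u_i \in e$, one has $\mathcal{T} \setminus e = B_1 \dot{\cup} \cdots \dot{\cup} B_r$, whereas $\mathcal{T}' \setminus e = \mathcal{T}'_u \dot{\cup} \{u_2\} \dot{\cup} \cdots \dot{\cup} \{u_r\}$, where $\mathcal{T}'_u$ is the hypertree formed by grafting the branches $B_2, \ldots, B_r$ onto $u$ through the new edges $e_j'$. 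Since $\varphi$ is multiplicative on disjoint unions, this gives $\varphi(\mathcal{T} \setminus e, x) = \prod_i \varphi(B_i, x)$ and $\varphi(\mathcal{T}' \setminus e, x) = x^{r-1}\varphi(\mathcal{T}'_u, x)$. I would then expand each side via the vertex-removal recursion $\varphi(T, x) = x\varphi(T - v, x) - \sum_{g \ni v}\varphi(T - V(g), x)$ — applied at the hub $u$ in $\mathcal{T}'_u$ (splitting the sum into original edges $f_h$ at $u$ and moved edges $e_j'$) and at each $u_i$ in $B_i$ for $i \geq 2$ — so that both sides live in the same ``basis'' of matching polynomials of smaller hyperforests such as $B_1 - u$, $B_1 - V(f_h)$, $B_i - u_i$, and $B_i - V(e_j)$.

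The main obstacle is the ensuing algebraic comparison, since the recursion produces terms with alternating signs and a blind bound is unavailable. The plan is to regroup the surviving terms so that every nonzero summand is a product of polynomials that are nonnegative on $[\rho(\mathcal{T}),\infty)$. The positivity ingredient is that $\rho(F) \leq \rho(\mathcal{T})$ for every partial hypergraph $F$ of $\mathcal{T}$ by Theorem \ref{YANGYANG-SIAM-10}, together with the fact that the matching polynomial of a hyperforest is nonnegative above its spectral radius (its leading coefficient is $1$ and its largest real root equals the spectral radius, in view of Theorem \ref{thm_Zhang_matchingpolyradius}). Once the regrouping is done, $\mathcal{T} \preceq \mathcal{T}'$ follows. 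For the strict part, one can invoke Theorem \ref{lem-edgemoving}: since edge-releasing at different vertices of $e$ yields isomorphic hypertrees, one may choose $u$ so that its entry in the principal eigenvector of $\mathcal{T}$ is maximal among $V(e)$, giving $\rho(\mathcal{T}) < \rho(\mathcal{T}')$. Combined with $\mathcal{T} \preceq \mathcal{T}'$ and $\varphi(\mathcal{T}, \rho(\mathcal{T})) = 0$, this forces $\varphi(\mathcal{T}', \rho(\mathcal{T})) < 0$, whence $\mathcal{T} \prec \mathcal{T}'$.
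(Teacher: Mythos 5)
The paper itself offers no proof of this lemma (it is imported from \cite{SuKLS}), so your proposal has to stand on its own, and as written it does not. The reduction is correct and well chosen: part (i) is fine, the observation $\mathcal{T}-V(e)=\mathcal{T}'-V(e)$ is right and, via Theorem~\ref{thm_matchingpoly}(b), reduces everything to showing $\varphi(\mathcal{T}\setminus e,x)\geq\varphi(\mathcal{T}'\setminus e,x)$ for $x\geq\rho(\mathcal{T})$, and the factorizations $\varphi(\mathcal{T}\setminus e,x)=\prod_i\varphi(B_i,x)$ and $\varphi(\mathcal{T}'\setminus e,x)=x^{r-1}\varphi(\mathcal{T}'_u,x)$ are correct. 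The genuine gap is that the decisive inequality is never proved: you describe a ``plan to regroup'' and explicitly flag the comparison as an obstacle you cannot yet resolve. Worse, the plan as stated (``every nonzero summand is a product of nonnegative polynomials'') is not what the expansion delivers. Writing $u_1=u$, $\psi_i=\varphi(B_i-u_i,x)$ and $S_i=\sum_{f\in E_{u_i}(B_i)}\varphi(B_i-V(f),x)$, the vertex recursion gives $\varphi(B_i,x)=x\psi_i-S_i$ and $\varphi(\mathcal{T}'\setminus e,x)=x^{r}\prod_i\psi_i-x^{r-1}\sum_iS_i\prod_{j\neq i}\psi_j$, so the difference equals $\sum_{|A|\geq 2}(-1)^{|A|}x^{r-|A|}\prod_{i\in A}S_i\prod_{j\notin A}\psi_j$, which still has terms of both signs. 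What is actually needed is the inequality $\prod_{i=1}^{r}(x-t_i)\geq x^{r-1}\bigl(x-\sum_{i=1}^{r}t_i\bigr)$ for $t_i=S_i/\psi_i\in[0,x)$, proved for instance by induction on $r$ using $(x-t_r)x^{r-2}\bigl(x-\sum_{i<r}t_i\bigr)=x^{r-1}\bigl(x-\sum_{i\leq r}t_i\bigr)+x^{r-2}t_r\sum_{i<r}t_i$; this identity is the missing idea, and it is also exactly where non-pendency of $e$ enters (at least two $t_i$ are positive, which is what makes the inequality strict). Without it your argument stops at the hard step.

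The strictness argument you offer instead is also logically incomplete. From $\mathcal{T}\preceq\mathcal{T}'$ and $\rho(\mathcal{T}')>\rho(\mathcal{T})$ (which you do correctly obtain from Theorem~\ref{lem-edgemoving} and the isomorphism of the releases at different vertices of $e$) you get only $\varphi(\mathcal{T}',\rho(\mathcal{T}))\leq 0$; nothing rules out $\varphi(\mathcal{T}',\rho(\mathcal{T}))=0$, since $\varphi(\mathcal{T}',\cdot)$ may well have a root at $\rho(\mathcal{T})$ below its largest root. The clean route to strictness is the same computation as above: at $x=\rho(\mathcal{T})$ every $\psi_i$ is strictly positive (each $B_i-u_i$ is a proper partial hypergraph of the connected $\mathcal{T}$, so Theorem~\ref{YANGYANG-SIAM-10} applies) and at least two $t_i$ are positive, so $\varphi(\mathcal{T},\rho(\mathcal{T}))-\varphi(\mathcal{T}',\rho(\mathcal{T}))>0$, which is precisely the non-vanishing required by the definition of $\prec$.
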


 \begin{theorem}[\cite{SuKLS}]\label{thm_matchingpoly}
Let $\mathcal{G}$ and $\mathcal{H}$ be two $r$-uniform hypergraphs. Then the following  statements hold.
\begin{enumerate}
  \item $\varphi(\mathcal{G}\dot{\cup}\mathcal{H},x)=\varphi(\mathcal{G},x)\varphi(\mathcal{H},x)$.
\item $\varphi(\mathcal{G},x)=\varphi(\mathcal{G}\setminus e,x)-\varphi(\mathcal{G}-V(e),x)$ if $e$ is an edge of $\mathcal{G}$.
\item If $u\in V(\mathcal{G})$ and $I=\{i | e_i\in E_u\}$, for any $J\subseteq I$, we have
$$\varphi(\mathcal{G},x)=\varphi(\mathcal{G}\setminus \{e_i | i \in J\},x)-\sum_{i\in J}\varphi(\mathcal{G}-V(e_i),x)$$
and
$$\varphi(\mathcal{G},x)=x\varphi(\mathcal{G}-u,x)-\sum_{e\in E_u}\varphi(\mathcal{G}-V(e),x).$$
\end{enumerate}
\end{theorem}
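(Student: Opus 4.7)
The plan is to prove all three parts directly from the definition $\varphi(\mathcal{H},x)=\sum_{k\ge 0}(-1)^{k}m(\mathcal{H},k)x^{n-kr}$ by a combinatorial count of $k$-matchings, together with careful bookkeeping of the number of vertices (since removing an edge $e$ along with $V(e)$ drops $r$ vertices, while deleting only the edge does not).

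For part (1), I would argue that any matching of $\mathcal{G}\dot\cup\mathcal{H}$ splits uniquely as a matching of $\mathcal{G}$ together with a matching of $\mathcal{H}$, because $V(\mathcal{G})\cap V(\mathcal{H})=\emptyset$ and no edge meets both. This yields the Vandermonde-type identity $m(\mathcal{G}\dot\cup\mathcal{H},k)=\sum_{i+j=k}m(\mathcal{G},i)m(\mathcal{H},j)$. Since the disjoint union has $|V(\mathcal{G})|+|V(\mathcal{H})|$ vertices, and $x^{(n_\mathcal{G}+n_\mathcal{H})-kr}=x^{n_\mathcal{G}-ir}\cdot x^{n_\mathcal{H}-jr}$ whenever $i+j=k$, the statement is just the polynomial product expansion.

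For part (2), I would partition the $k$-matchings of $\mathcal{G}$ according to whether they contain the edge $e$: those that do not are exactly the $k$-matchings of $\mathcal{G}\setminus e$, and those that do correspond bijectively to $(k-1)$-matchings of $\mathcal{G}-V(e)$. Thus $m(\mathcal{G},k)=m(\mathcal{G}\setminus e,k)+m(\mathcal{G}-V(e),k-1)$. Multiplying by $(-1)^k x^{n-kr}$ and summing, while using that $\mathcal{G}-V(e)$ has $n-r$ vertices so that $(-1)^k m(\mathcal{G}-V(e),k-1)x^{n-kr}=-(-1)^{k-1}m(\mathcal{G}-V(e),k-1)x^{(n-r)-(k-1)r}$, delivers the identity after a shift of summation index.

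For part (3), the first identity is essentially part (2) iterated, but done in one shot: any $k$-matching of $\mathcal{G}$ contains at most one edge from $\{e_i\mid i\in J\}$, because all those edges share the vertex $u$. Partition the $k$-matchings accordingly into those avoiding $\{e_i\mid i\in J\}$ (which are exactly the $k$-matchings of $\mathcal{G}\setminus\{e_i\mid i\in J\}$) and those containing some specific $e_i$ with $i\in J$ (counted by $m(\mathcal{G}-V(e_i),k-1)$). The usual multiply-and-sum argument then gives the stated polynomial relation. For the second identity in (3), I would apply the first with $J=I$: removing every edge at $u$ leaves $u$ as an isolated vertex, so $\mathcal{G}\setminus\{e_i\mid i\in I\}$ is the disjoint union of $(\mathcal{G}-u)$ with the one-vertex hypergraph on $\{u\}$, whose matching polynomial is $x$. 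Part (1) then collapses $\varphi(\mathcal{G}\setminus\{e_i\mid i\in I\},x)$ into $x\,\varphi(\mathcal{G}-u,x)$, finishing the proof.

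The only real subtlety, and the one place I would be most careful, is the vertex-count shift: $\mathcal{G}\setminus e$ retains all $n$ vertices of $\mathcal{G}$, while $\mathcal{G}-V(e)$ drops $r$ of them, and $\mathcal{G}-u$ drops one. Keeping these exponents straight is what makes the definition $\varphi(\mathcal{H},x)=\sum_k(-1)^{k}m(\mathcal{H},k)x^{n-kr}$ preferable to the degree-$\nu(\mathcal{H})r$ version: the recursion becomes a clean polynomial identity of a single degree, rather than requiring ad hoc multiplications by powers of $x$ to reconcile degrees of the summands.
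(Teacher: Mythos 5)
The paper does not actually prove this theorem; it is imported verbatim from the reference [SuKLS] (``The matching polynomials and spectral radii of uniform supertrees, submitted''), so there is no in-paper proof to compare against. Your argument is correct and is the natural one: all three identities follow from the definition $\varphi(\mathcal{H},x)=\sum_{k\ge 0}(-1)^km(\mathcal{H},k)x^{n-kr}$ by the standard partition of $k$-matchings (split over the two components for (1); according to whether the matching contains $e$ for (2); according to which, if any, edge of $\{e_i\mid i\in J\}$ it contains for (3), using that these edges pairwise intersect at $u$ so at most one can occur), together with the exponent bookkeeping you flag ($\mathcal{G}\setminus e$ keeps all $n$ vertices while $\mathcal{G}-V(e)$ loses $r$ and $\mathcal{G}-u$ loses one). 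This is exactly the hypergraph analogue of the classical Godsil--Gutman recurrences for the matching polynomial of a graph. One small point of hygiene in your derivation of the second identity of (3): the one-vertex hypergraph $\{u\}$ is not $r$-uniform for $r\ge 2$, so strictly speaking part (1) as stated does not apply to the decomposition $(\mathcal{G}-u)\,\dot\cup\,\{u\}$; either observe that the product formula holds for arbitrary disjoint unions, or simply note directly that $\mathcal{G}\setminus\{e_i\mid i\in I\}$ has the same matchings as $\mathcal{G}-u$ but one more vertex, whence its matching polynomial is $x\,\varphi(\mathcal{G}-u,x)$. With that cosmetic fix the proof is complete.
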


A  hypergraph $\mathcal{H}=(V,E)$ is called a \textit{subtree hypergraph} if
  there is a tree $T$ with vertex set $V$ such that vertices in $e\in E$ induces a
subtree in $T$. It is not hard to see that hypertrees are subtree hypergraphs.

Let $\mathcal{H}=(V, E)$ be a hypergraph. A subfamily of  edges $F\subseteq E$  is an \textit{intersecting family} if every pair of  edges in $F$ has a non-empty intersection. A hypergraph
has the \textit{Helly property} if each intersecting family has a non-empty intersection.

\begin{lemma}[\cite{Bretto-hypergraph}]\label{lem-subtree-Helly}
Any subtree hypergraph   has the Helly property.
\end{lemma}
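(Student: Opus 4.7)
My plan is to re-express the lemma in the language of subtrees of a tree and then prove it by induction on the size of the intersecting family, with the key geometric input being the \emph{median} of three vertices in a tree. By the definition of a subtree hypergraph, fix an ordinary tree $T$ on vertex set $V$ such that every edge $e\in E(\mathcal{H})$ induces a subtree $T_e$ of $T$. An intersecting family $F=\{e_1,\dots,e_k\}\subseteq E(\mathcal{H})$ then corresponds to a family of pairwise-intersecting subtrees of $T$, and the goal is to produce a vertex of $T$ lying in every $T_{e_i}$, which then lies in $\bigcap_{e\in F}e$.

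I would induct on $k$. The case $k\le 2$ is immediate from the hypothesis that $F$ is intersecting. For the inductive step, apply the induction hypothesis to the three sub-families $F\setminus\{e_1\}$, $F\setminus\{e_{k-1}\}$ and $F\setminus\{e_k\}$---each still pairwise intersecting and of size $k-1$---to obtain vertices $v_1,v_{k-1},v_k$ of $T$ with the property that $v_j\in T_{e_i}$ for every $i\neq j$. Let $m$ denote the median of $v_1,v_{k-1},v_k$ in $T$, namely the unique vertex of $T$ lying on each of the three pairwise paths joining these three vertices.

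I claim $m\in T_{e_i}$ for every $i\in\{1,\dots,k\}$. At least two of the indices $\{1,k-1,k\}$ differ from $i$, so at least two of $v_1,v_{k-1},v_k$ belong to $T_{e_i}$. Since $T_{e_i}$ is a subtree of $T$, the unique path in $T$ between those two vertices lies entirely inside $T_{e_i}$, and $m$ lies on this path by the defining property of the median. Hence $m$ is a common vertex of all the subtrees $T_{e_i}$, which translates back to a vertex in $\bigcap_{e\in F}e$, finishing the induction.

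The only step that I would treat as the genuine obstacle---and verify carefully before invoking it---is the existence and path-membership property of the median: given three vertices of a tree, the three pairwise paths between them share a unique common vertex. This can be proved by a short case analysis using the uniqueness of paths in $T$, or, more efficiently, by rooting $T$ arbitrarily and letting $m$ be the deepest vertex that is an ancestor of at least two of $v_1,v_{k-1},v_k$; one then checks directly that $m$ lies on each of the three pairwise paths. Everything else in the argument is bookkeeping of which subfamily each $v_j$ witnesses.
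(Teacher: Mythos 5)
Your argument is correct. The paper itself gives no proof of this lemma --- it is quoted from Bretto's book --- so there is nothing internal to compare against, but your median-of-three induction is the standard and complete way to establish the Helly property for subtrees of a tree: the three leave-one-out subfamilies yield vertices $v_1,v_{k-1},v_k$, each subtree $T_{e_i}$ contains at least two of them and hence the unique tree path between them, and the median lies on all three pairwise paths. The only ingredient you flag as needing verification, the existence of the median, is indeed a short exercise in the uniqueness of paths in a tree, and your rooted-tree construction of it works.
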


An $n$-dimensional vector $(a_1,\ldots,a_n)$ is said to be \textit{non-increasing} if $a_1\geq a_2\geq\ldots\geq a_n$.
Let $\pi=(a_1,\ldots,a_n)$ and $\pi'=(a_1',\ldots,a_n')$ be two non-increasing real vectors.  $\pi$ is said to be \textit{majorized} by  $\pi'$, which is denoted by $\pi\lhd\pi'$ (or $\pi'\triangleright\pi$),  if
\begin{align*}
&\sum_{i=1}^ka_i\leq \sum_{i=1}^ka_i',\,\, \textrm{for} \,\,   k=1,2,\ldots, n-1;\\
&\sum_{i=1}^na_i= \sum_{i=1}^na_i'.
\end{align*}

\begin{lemma}  \label{lem-majorization}
Let $\pi$  and $\pi'$ be two different non-increasing integral nonnegative vectors. If  $\pi\lhd\pi'$, then there exists a sequence of  non-increasing integral nonnegative vectors $\pi_1, \pi_2, \ldots, \pi_k$ such that $(\pi=)\pi_{k+1}\lhd \pi_{k}\lhd\cdots\lhd\pi_1\lhd\pi_{0}(=\pi')$, where  $\pi_i$  and $\pi_{i+1}$ differ only in two positions and  the differences are 1 for $0\leq i\leq k$.
\end{lemma}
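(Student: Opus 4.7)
The plan is to induct on the total absolute difference $D(\pi,\pi') := \sum_{i=1}^n |a_i - a_i'|$, which is a positive even integer since $\pi\neq\pi'$ and the two vectors have equal total sum. When $D=2$ the two vectors already differ in exactly two positions by $1$ and there is nothing to prove (take $k=0$). For $D\geq 4$ it suffices to produce a single intermediate non-increasing integral nonnegative vector $\pi_1$ that differs from $\pi'$ in exactly two coordinates by $1$ and satisfies $\pi\lhd\pi_1\lhd\pi'$, because then $D(\pi,\pi_1) = D-2$ and the induction hypothesis applied to the pair $\pi\lhd\pi_1$ supplies the remaining $\pi_2,\ldots,\pi_{k+1}$.

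To build $\pi_1$, I would let $p$ be the smallest index with $a_p \neq a_p'$; minimality combined with $\pi\lhd\pi'$ forces $a_p<a_p'$. Let $q$ be the smallest index greater than $p$ with $a_q>a_q'$, which must exist because $\sum_i a_i = \sum_i a_i'$ and the deficit at position $p$ has to be compensated later. Naively decreasing $a_p'$ by $1$ and increasing $a_q'$ by $1$ can destroy the non-increasing property, so I would instead modify at the corrected indices $p^* := \max\{\ell : a_\ell'=a_p'\}$ and $q^* := \min\{\ell : a_\ell'=a_q'\}$, setting $\pi_1 := \pi' - \mathbf{e}_{p^*} + \mathbf{e}_{q^*}$. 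Monotonicity of $\pi'$ at these extremal positions gives $a_{p^*}' > a_{p^*+1}'$ (or $p^*=n$) and $a_{q^*-1}' > a_{q^*}'$, so $\pi_1$ remains non-increasing; the one tight case $p^*+1=q^*$ uses the extra observation $a_{p^*}' - a_{q^*}' \geq 2$, which follows from $a_p'>a_p\geq a_q>a_q'$ in integers. Also $p^*<q^*$ follows from monotonicity of $\pi'$ together with $a_{p^*}'=a_p'>a_{q^*}'=a_q'$, and monotonicity of $\pi$ forces $a_{p^*}<a_{p^*}'$ and $a_{q^*}>a_{q^*}'$, so the entries of $\pi_1$ are nonnegative integers.

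To verify the two majorizations, compare partial sums $s_\ell$, $s_\ell'$ and $s_\ell^{(1)}$ of $\pi$, $\pi'$ and $\pi_1$ respectively: one has $s_\ell^{(1)} = s_\ell'$ outside $[p^*,q^*)$ and $s_\ell^{(1)} = s_\ell'-1$ on $[p^*,q^*)$, which gives $\pi_1\lhd\pi'$ immediately. For $\pi\lhd\pi_1$ the only nontrivial interval is $[p^*,q^*)\subseteq [p,q)$, where strict inequality $s_\ell<s_\ell'$ is required. At $\ell=p$ this is immediate from $s_{p-1}=s_{p-1}'$ and $a_p<a_p'$, and by the choice of $q$ the differences $a_\ell'-a_\ell$ are nonnegative for $p<\ell<q$, so $s_\ell'-s_\ell$ is non-decreasing on $[p,q-1]$ and hence remains strictly positive.

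The delicate step throughout is the replacement of $p,q$ by $p^*,q^*$ to preserve the non-increasing property of $\pi_1$; this is the only place where a careless construction fails, and without it one could produce a vector that violates monotonicity even though the majorization bookkeeping would be correct. Once this point is handled, the rest is the standard ``Robin Hood'' elementary-transfer decomposition of majorization applied to nonnegative integer vectors.
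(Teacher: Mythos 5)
Your proof is correct and follows essentially the same route as the paper's: both construct a single elementary transfer $\pi_1$ differing from $\pi'$ in two positions by $1$ with $\pi\lhd\pi_1\lhd\pi'$, and then iterate, using the fact that the $\ell_1$-distance to $\pi$ drops by $2$ at each step. The only difference is in how the transfer positions are chosen to preserve monotonicity --- the paper takes $p$ to be the smallest index with $a_p>a_p'$ and $q$ the largest index below $p$ with $a_q<a_q'$ (so the intervening entries of $\pi$ and $\pi'$ coincide and the non-increasing property of $\pi_1$ is immediate), whereas you transfer at the block boundaries $p^*,q^*$ --- but both devices handle that point correctly.
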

\begin{proof}
Assume  $\pi=(a_1,a_2,\ldots,a_n)$  and $\pi'=(b_1,b_2,\ldots,b_n)$ are two different non-increasing integral nonnegative vectors with $\pi\lhd\pi'$.
Let $p$ be the smallest index such that $a_p>b_p$.
We claim such $p$ does exist. Otherwise,  $a_i\leq b_i$ for all $1\leq i\leq n$. Combining this   with the assumption that $\sum_{i=1}^na_i=\sum_{i=1}^nb_i$,  we obtain  that $a_i=b_i$ for $i=1,\ldots, n$, a contradiction to $\pi\neq\pi'$.
Furthermore,  there is an integer $i$ ($1\leq i<p$) such that $a_i< b_i$, and let   $q$ be the largest such index. Otherwise,
  $a_i\geq b_i$ for all $1\leq i<p$, then $\sum_{i=1}^{p-1}a_i\geq\sum_{i=1}^{p-1}b_i$ and $\sum_{i=1}^{p}a_i>\sum_{i=1}^{p}b_i$, a contradiction.  Then we have,
\begin{equation}\label{e-pq}
a_q< b_q,\quad a_p>b_p, \quad a_i= b_i, \quad \textrm{for} \,\, q<i<p.
\end{equation}
Now let $\pi_1=(b_1', b_2',\ldots, b_n')$ be a non-increasing integral nonnegative vector given by
\[b_i'=\left\{
  \begin{array}{ll}
    b_q-1, & \hbox{if $i=q$;} \\
    b_p+1, & \hbox{if $i=p$;} \\
    b_i, & \hbox{otherwise.}
  \end{array}
\right.
\]
It is easy to verify that $b_1'\geq b_2'\geq \cdots\geq b_n'$, and  $\pi_1\lhd \pi'$. Now we shall show that $\pi\lhd \pi_1$. First note that
$$\sum_{i=1}^ka_i\leq \sum_{i=1}^kb_i=\sum_{i=1}^kb_i',\,\,\textrm{for} \,\, 1\leq k\leq q-1\,\,\textrm{and} \,\, p\leq k\leq n.$$
For $q\leq k\leq p-1$, since $a_q\leq b_q-1$, we have
\begin{align*}
\sum_{i=1}^ka_i&=\sum_{i=1}^{q-1}a_i+ a_q+\sum_{i=q+1}^ka_i\\
&\leq \sum_{i=1}^{q-1}b_i+ a_q+\sum_{i=q+1}^ka_i\\
&\leq \sum_{i=1}^{q-1}b_i+ b_q-1+\sum_{i=q+1}^ka_i\\
&=\sum_{i=1}^kb_i'.
\end{align*}
So $\pi\lhd \pi_1$.
Furthermore, we have
\begin{align*}
\|\pi'-\pi\|_{1}-\|\pi_1-\pi\|_{1}&=\sum_{i=1}^n|b_i-a_i|-\sum_{i=1}^n|b_i'-a_i|\\
&=|b_q-a_q|+|b_p-a_p|- |b_q-1-a_q|-|b_p+1-a_p|\\
&=2.
\end{align*}

That is, $\|\pi_1-\pi\|_{1}=\|\pi'-\pi\|_{1}-2$.

Since $\pi\lhd \pi_1$, if $\pi\neq\pi_1$,  in a similar way we can determine a non-increasing  integral nonnegative  vector $\pi_2$ such that
\[
\pi\lhd\pi_2\lhd \pi_1,
\]
and $\|\pi_2-\pi\|_{1}=\|\pi_1-\pi\|_{1}-2$.
Since $\|\pi'-\pi\|$ is finite,
repeating the above process in a finite number of steps, we can obtain a sequence of vectors $\pi_{1}, \pi_{2},\ldots, \pi_{k}, \pi_{k+1}=\pi$ such that
$$\pi=\pi_{k+1}\lhd \pi_{k}\lhd\cdots\lhd\pi_1\lhd\pi_{0}=\pi',$$
and  $\pi_i$  and $\pi_{i+1}$ differ only in two positions, and the differences are 1 for $0\leq i\leq k$.
\end{proof}

For  positive integers $a,b$ and $c$, let $\mathcal{A}_{a,b}^c$ be  the set of all non-increasing integral nonnegative vectors of dimension $b$  as follows:
\begin{equation*}\label{e-Sabc}
\mathcal{A}_{a,b}^c=\{(x_1,\ldots,x_{b})\in \mathbb{Z}^b :  c\geq x_1\geq x_2\geq\cdots\geq x_b\geq 0, x_1+x_2+\cdots+x_b=a \}.
\end{equation*}

\section{Some transformations on hypertrees with a given size of matching}

Let $a\leq r$ be a positive integer, and   $R_a$ (see Fig.~1 (a)) be an $r$-uniform hypertree with  vertex set $V(R_a)=\{v_j^{(i)}:i=1,\ldots,a;j=1,\ldots,r\}\cup\{u_1,\ldots,u_{r-a}\}$ and edge set $E(\mathcal{T})=\{e,e_1,\ldots,e_a\}$,  where  $e=\{v_1^{(1)},\ldots,v_{1}^{(a)}, u_1,\ldots, u_{r-a}\}$  and $e_i=\{v_1^{(i)},\ldots,v_r^{(i)}\}$, $i=1,\ldots,a$. Let $\mathcal{T}$   be an  $r$-uniform hypertree with $v\in V(\mathcal{T})$. Let $a\leq r-1$ be an integer,  and $\mathcal{T}(v;a)$  (see Fig.~1 (b)) be the $r$-uniform hypertree  obtained from $R_a$ and $\mathcal{T}$  by identifying a core vertex of $e$ in $R_a$ and $v$ of $\mathcal{T}$. Let $a, b$ be two integers with $a,b\leq r-1$,  and $\mathcal{T}(v;a,b)$ (see Fig.~1 (c)) be the $r$-uniform hypertree  obtained from   $R_a$ and $\mathcal{T}(v; b)$  by identifying a core vertex of $e$  in $R_a$   and $v$ of $\mathcal{T}(v; b)$. We have the following results which will be used in our proof.

\begin{figure}[!hbpt]
\begin{center}
\includegraphics[scale=0.6]{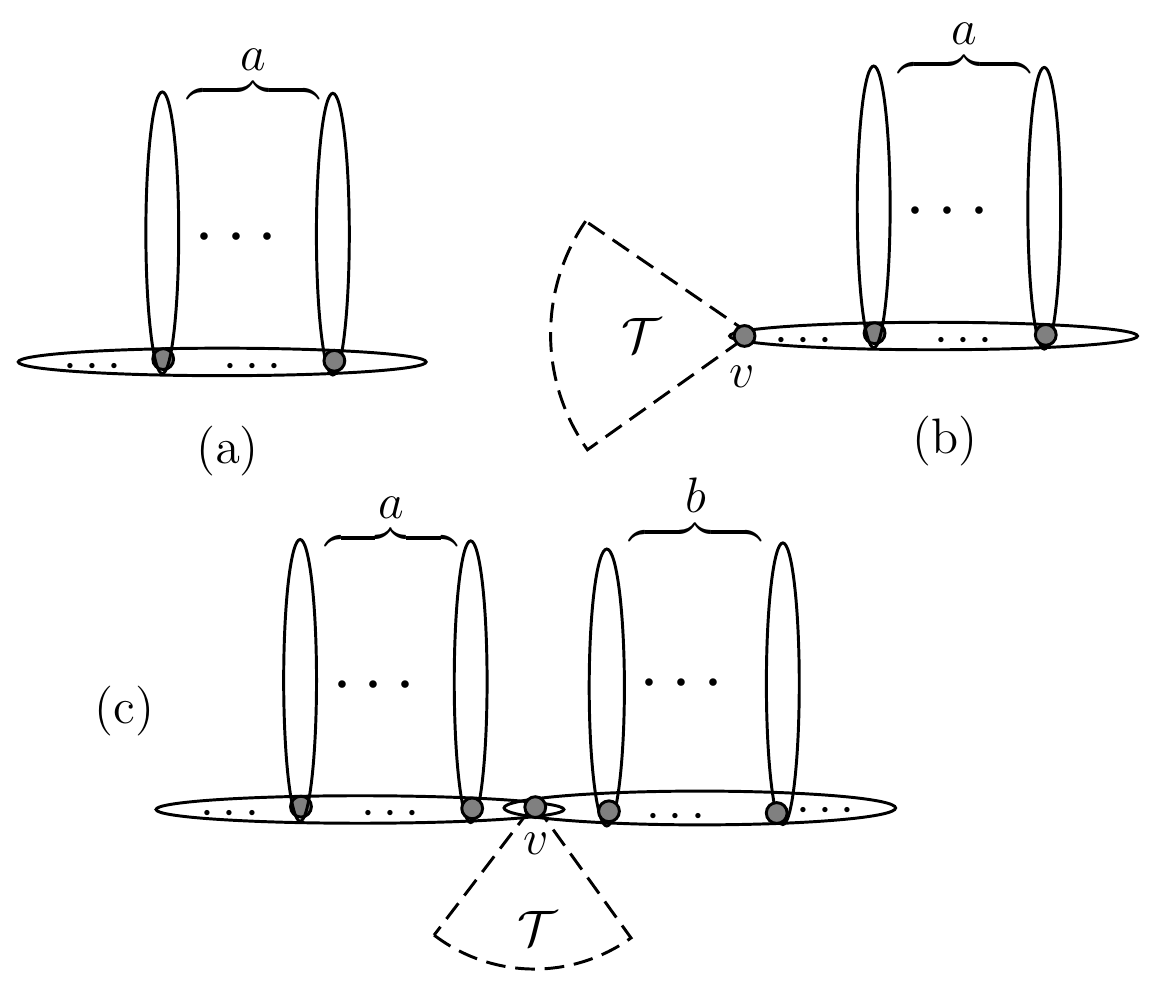}
\caption{Hypertrees  (a) $R_a$, (b) $\mathcal{T}(v;a)$, (c) $\mathcal{T}(v;a,b)$}.
\end{center}\label{figTab}
\end{figure}

\begin{lemma}\label{lem_RaTva}
Let $\mathcal{T}$ be an $r$-uniform hypertree,   and $R_a$  and $\mathcal{T}(v;a)$ be defined as above. Then
\begin{align}
&(a).  \hskip 6mm  \varphi(R_a,x)=x^{r-a}(x^r-1)^{a}-x^{a(r-1)}\nonumber \\
  &(b).  \hskip 6mm \varphi(\mathcal{T}(v;a),x)=x^{r-a-1}(x^r-1)^{a} \varphi(\mathcal{T},x)-x^{a(r-1)} \varphi(\mathcal{T}-v,x).\label{4}
\end{align}
\end{lemma}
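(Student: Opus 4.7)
The plan is to apply the edge-deletion recursion from Theorem \ref{thm_matchingpoly}(2) to the central edge $e$ in both hypertrees, and then identify the resulting partial hypergraphs as disjoint unions whose matching polynomials factor cleanly via Theorem \ref{thm_matchingpoly}(1). Note in each case the matching polynomial of an isolated vertex is $x$ and of a single $r$-edge (viewed as an $r$-uniform hypergraph on $r$ vertices) is $x^r-1$.

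For part (a), I would observe that in $R_a\setminus e$ the pendant edges $e_1,\ldots,e_a$ become pairwise disjoint components (each a single $r$-edge on its $r$ vertices), while the core vertices $u_1,\ldots,u_{r-a}$ of $e$ become isolated. Using Theorem \ref{thm_matchingpoly}(1) this gives
$$\varphi(R_a\setminus e,x)=x^{r-a}(x^r-1)^a.$$
Next, deleting $V(e)$ removes each intersection vertex $v_1^{(i)}$, which destroys the edge $e_i$ but leaves the remaining $r-1$ vertices $v_2^{(i)},\ldots,v_r^{(i)}$ intact as isolated vertices; the core vertices $u_j$ have no further effect. Hence $\varphi(R_a-V(e),x)=x^{a(r-1)}$, and subtracting yields (a).

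For part (b), the same decomposition applied to the central edge of $\mathcal{T}(v;a)$ works, with one new bookkeeping point: the identified vertex $v=u_1$ is now a vertex of $\mathcal{T}$. Thus $\mathcal{T}(v;a)\setminus e$ is the disjoint union of $\mathcal{T}$, the $a$ single edges $e_1,\ldots,e_a$, and the remaining $r-a-1$ isolated core vertices $u_2,\ldots,u_{r-a}$, so Theorem \ref{thm_matchingpoly}(1) gives $\varphi(\mathcal{T}(v;a)\setminus e,x)=x^{r-a-1}(x^r-1)^a\varphi(\mathcal{T},x)$. Similarly, $\mathcal{T}(v;a)-V(e)$ removes $v$ from $\mathcal{T}$ (killing its incident edges in $\mathcal{T}$ but leaving $\mathcal{T}-v$ as a component) and, just as in part (a), leaves $a(r-1)$ isolated vertices from the dismantled pendant edges, yielding $\varphi(\mathcal{T}(v;a)-V(e),x)=x^{a(r-1)}\varphi(\mathcal{T}-v,x)$. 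Substituting into Theorem \ref{thm_matchingpoly}(2) produces identity (4).

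The proof is essentially routine once the correct partial hypergraphs are identified, so the only real obstacle is the structural bookkeeping: one must carefully distinguish the intersection vertices $v_1^{(i)}$ from the core vertices $u_j$ when tracking what $V(e)$-removal destroys. In particular, it is crucial that the pendant edges $e_i$ are eliminated (not merely shrunk) because an $r$-uniform partial hypergraph cannot retain a proper subset of $e_i$ as an edge; this is exactly what produces the clean factor $x^{a(r-1)}$ in both parts. As a sanity check, specializing (b) with $\mathcal{T}$ a single vertex (so $\varphi(\mathcal{T},x)=x$ and $\varphi(\mathcal{T}-v,x)=1$) recovers (a).
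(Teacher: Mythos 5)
Your proposal is correct and follows exactly the paper's own argument: apply the edge-deletion identity of Theorem~\ref{thm_matchingpoly}(b) to the central edge $e$, and evaluate $\varphi(\cdot\setminus e,x)$ and $\varphi(\cdot-V(e),x)$ by factoring the resulting disjoint unions. You merely spell out the component bookkeeping that the paper leaves implicit, and your sanity check (specializing $\mathcal{T}$ to a single vertex recovers (a)) is a nice touch.
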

 \begin{proof} (a).
Applying (b) of Lemma~\ref{thm_matchingpoly} to $R_a$ and the edge $e$ with $a$ intersection vertices, we have
\begin{align*}
  \varphi(R_a,x)=\varphi(R_a\setminus e,x)-\varphi(R_a-V(e),x)=x^{r-a}(x^r-1)^{a}-x^{a(r-1)}.
\end{align*}

(b).  Applying (b) of Lemma~\ref{thm_matchingpoly} to $\mathcal{T}(v;a)$ and the edge $e$ with $a+1$ intersection vertices, we have
\begin{align*}
  \varphi(\mathcal{T}(v;a),x)=\varphi(\mathcal{T}(v;a)\setminus e,x)-\varphi(\mathcal{T}(v;a)-V(e),x)=x^{r-a-1}(x^r-1)^{a} \varphi(\mathcal{T},x)-x^{a(r-1)} \varphi(\mathcal{T}-v,x).
\end{align*}
\end{proof}

\begin{lemma}\label{lem_Tab-Ta+1b-1}
Let   $\mathcal{T}$  be  an  $r$-uniform hypertree  with $v\in V(\mathcal{T})$, and $\mathcal{T}(v;a,b)$ be defined as above. If $r-2\geq a\geq b\geq1$, then $\mathcal{T}(v;a,b)\prec\mathcal{T}(v;a+1,b-1)$.
\end{lemma}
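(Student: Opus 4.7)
The plan is to expand $\varphi(\mathcal{T}(v;a,b),x)$ as a linear combination of $\varphi(\mathcal{T},x)$ and $\varphi(\mathcal{T}-v,x)$ by iterating Lemma~\ref{lem_RaTva}(b). Viewing $\mathcal{T}(v;a,b)$ as $\mathcal{T}(v;b)$ with a copy of $R_a$ attached at $v$, one application of Lemma~\ref{lem_RaTva}(b) yields
\[
\varphi(\mathcal{T}(v;a,b),x) = x^{r-a-1}(x^r-1)^a\,\varphi(\mathcal{T}(v;b),x) - x^{a(r-1)}\,\varphi(\mathcal{T}(v;b)-v,x).
\]
Expanding $\varphi(\mathcal{T}(v;b),x)$ by the same lemma, and using (a) of Theorem~\ref{thm_matchingpoly} together with the observation that removing $v$ shatters the $R_b$-piece into $b$ disjoint edges and $r-b-1$ isolated vertices to write $\varphi(\mathcal{T}(v;b)-v,x) = x^{r-b-1}(x^r-1)^b\,\varphi(\mathcal{T}-v,x)$, I obtain
\[
\varphi(\mathcal{T}(v;a,b),x) = x^{2r-a-b-2}(x^r-1)^{a+b}\,\varphi(\mathcal{T},x) - B_{a,b}(x)\,\varphi(\mathcal{T}-v,x),
\]
where $B_{a,b}(x) = x^{(b+1)r-(a+b+1)}(x^r-1)^a + x^{(a+1)r-(a+b+1)}(x^r-1)^b$.

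Because the coefficient of $\varphi(\mathcal{T},x)$ is a function of $a+b$ alone, it cancels upon forming the difference of the identities for $(a,b)$ and $(a+1,b-1)$. I would then simplify $B_{a+1,b-1}(x) - B_{a,b}(x)$ by pairing the monomial carrying $(x^r-1)^{a+1}$ against the one carrying $(x^r-1)^a$, and similarly for the $b$-side, collapsing each pair through the identity $(x^r-1)+1 = x^r$. This produces
\[
\varphi(\mathcal{T}(v;a,b),x) - \varphi(\mathcal{T}(v;a+1,b-1),x) = x^{br-a-b-1}(x^r-1)^{b-1}\bigl[x^{r(a-b+1)} - (x^r-1)^{a-b+1}\bigr]\varphi(\mathcal{T}-v,x).
\]

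It remains to check positivity of each factor for $x \ge \rho(\mathcal{T}(v;a,b))$. Since $\mathcal{T}(v;a,b)$ is connected and contains at least two edges, Theorem~\ref{YANGYANG-SIAM-10} gives $\rho(\mathcal{T}(v;a,b)) > 1$. The hypothesis $r-2 \ge a \ge b \ge 1$ makes $br - a - b - 1 = b(r-1) - a - 1 \ge 0$, so $x^{br-a-b-1} > 0$; the factor $(x^r-1)^{b-1}$ is positive when $b \ge 2$ and equals $1$ when $b = 1$; and the bracketed term is strictly positive because $a-b+1 \ge 1$ and $0 < x^r - 1 < x^r$. For the matching-polynomial factor, Theorem~\ref{thm_Zhang_matchingpolyradius} combined with (a) of Theorem~\ref{thm_matchingpoly} identifies $\rho(\mathcal{T}-v)$ as the largest real root of the monic polynomial $\varphi(\mathcal{T}-v,x)$, and Theorem~\ref{YANGYANG-SIAM-10} gives $\rho(\mathcal{T}-v) \le \rho(\mathcal{T}) < \rho(\mathcal{T}(v;a,b))$, so $\varphi(\mathcal{T}-v,x) > 0$ on the relevant range. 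Combining these, $\varphi(\mathcal{T}(v;a,b),x) > \varphi(\mathcal{T}(v;a+1,b-1),x)$ for every $x \ge \rho(\mathcal{T}(v;a,b))$, which is exactly the assertion $\mathcal{T}(v;a,b) \prec \mathcal{T}(v;a+1,b-1)$. I expect the main obstacle to be the exponent bookkeeping that leads to the factored form above; once that form is in hand, the positivity verification is entirely routine.
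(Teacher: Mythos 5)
Your proposal is correct and follows essentially the same route as the paper: both expand the matching polynomials via the recurrences of Lemma~\ref{lem_RaTva} and Theorem~\ref{thm_matchingpoly}(b), cancel the $\varphi(\mathcal{T},x)$ terms, arrive at the identical factored difference $x^{b(r-1)-a-1}(x^r-1)^{b-1}\bigl[x^{(a-b+1)r}-(x^r-1)^{a-b+1}\bigr]\varphi(\mathcal{T}-v,x)$, and conclude positivity from $\rho(\mathcal{T}-v)<\rho(\mathcal{T}(v;a,b))$. The only (immaterial) difference is organizational: the paper first deletes a pendent edge to relate both hypertrees to the common term $\varphi(\mathcal{T}(v;a,b-1),x)$, whereas you expand both polynomials fully in terms of $\varphi(\mathcal{T},x)$ and $\varphi(\mathcal{T}-v,x)$ before subtracting.
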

 \begin{proof}

 Applying (b) of Lemma~\ref{thm_matchingpoly} to $\mathcal{T}(v;a,b)$ and one  edge of $b$ pendent edges, we have
 \begin{align}
  \varphi(\mathcal{T}(v;a,b),x)=x^{r-1}\varphi(\mathcal{T}(v;a,b-1),x)-x^{r-1-b}(x^r-1)^{b-1}\varphi(\mathcal{T}(v;a),x).\label{5}
\end{align}

 Applying (b) of Lemma~\ref{thm_matchingpoly} to $\mathcal{T}(v;a+1,b-1)$ and one  edge of $a+1$ pendent edges, we have
\begin{align}
   \varphi(\mathcal{T}(v;a+1,b-1),x)=x^{r-1}\varphi(\mathcal{T}(v;a,b-1),x)-x^{r-2-a}(x^r-1)^{a}\varphi(\mathcal{T}(v;b-1),x).\label{6}
\end{align}

 By (\ref{4}),   (\ref{5}) and (\ref{6}),  we have
\begin{align*}\label{e-Tab-Ta+1b-1}
  &\varphi(\mathcal{T}(v;a,b),x)-\varphi(\mathcal{T}(v;a+1,b-1),x)\nonumber\\
  &=x^{r-2-a}(x^r-1)^{a}\varphi(\mathcal{T}(v;b-1),x)-x^{r-1-b}(x^r-1)^{b-1}\varphi(\mathcal{T}(v;a),x)\nonumber\\
&=x^{2r-a-b-2}(x^r-1)^{a+b-1}\varphi(\mathcal{T},x)-x^{b(r-1)-a-1}(x^r-1)^{a}\varphi(\mathcal{T}-v,x)\nonumber\\
&\quad-x^{2r-a-b-2}(x^r-1)^{a+b-1}\varphi(\mathcal{T},x)+x^{(a+1)(r-1)-b}(x^r-1)^{b-1}\varphi(\mathcal{T}-v,x)\nonumber\\
&=x^{b(r-1)-a-1}(x^r-1)^{b-1}\varphi(\mathcal{T}-v,x)(x^{(a+1-b)r}-(x^r-1)^{a+1-b}).
\end{align*}
Since  $\mathcal{T}-v$ is a proper partial hypergraph of $\mathcal{T}(v;a,b)$, we have $\rho(\mathcal{T}-v)<\rho(\mathcal{T}(v;a,b))$ by Theorem~\ref{YANGYANG-SIAM-10}. Thus if $x\geq \rho(\mathcal{T}(v;a,b))$, we have  $\varphi(\mathcal{T}-v,x)>0$ and then  $\varphi(\mathcal{T}(v;a,b))>\varphi(\mathcal{T}(v;a+1,b-1))$ considering $a\geq b$, namely $\mathcal{T}(v;a,b)\prec\mathcal{T}(v;a+1,b-1)$ when $a\geq b$.
\end{proof}

\begin{figure}[!hbpt]
\begin{center}
\includegraphics[scale=0.6]{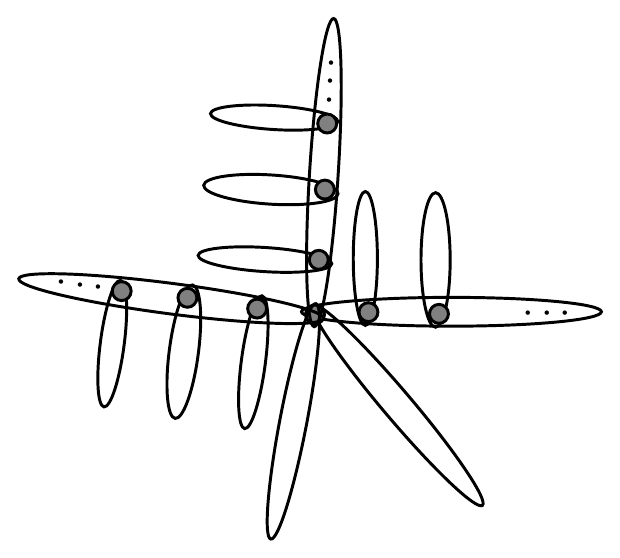}
\caption{Hypertrees   $S(3^{(2)},2,0^{(2)})$}
\end{center}\label{figTtildeabSab}
\end{figure}

\section{The first hypertree with  a given size of matching}

Let $S_t^r$ be a hyperstar  with edges $e_1,\ldots,e_t$ and center $v$. Denote by $S(a_1,a_2,\ldots,a_t)$  (see  Fig.~2 ) the $r$-uniform hypertree obtained from $S_t^r$ by attaching  $a_i$ disjoint pendent edges at distinct core vertices of $e_i$ for $i=1,\ldots,t$, and $v$ is called the \textit{center} of $S(a_1,a_2,\ldots,a_t)$.
 If $a_1=\cdots=a_s$ for some $s\leq t$,  $S(a_1,a_2,\ldots,a_t)$ is simply written as  $S(a_1^{(s)},a_{s+1},\ldots,a_t)$,  see Fig. 2.
Denote by $T_{m,k,r}$ the set of all $r$-uniform hypertrees with $m$ edges and a $k$-matching.
Let  $q,s,l$   be integers determined by $m,k,r$ as follows:
\begin{equation}\label{e-qsl}
\left\{
  \begin{array}{ll}
    k-1=(r-1)q+s, & \hbox{where $0\leq s< r-1$;} \\
    m=qr+s+1+l.
  \end{array}
\right.
\end{equation}

\noindent
 Let $A(m,k,r)$ be  the hypertree $S((r-1)^{(q)}, s,0^{(l)})$.

\begin{theorem}  \label{thm_maximum-radius-matchingnumber}
For any   $\mathcal{T}\in T_{m,k,r}$, we have
\begin{equation}\label{e-maxspecradi}
\rho(\mathcal{T})\leq \left(\frac{1}{1-\alpha_0}\right)^{1/r},
\end{equation}
 where $\alpha_0$ is  the maximum root of
\begin{equation}\label{10}
x^{r-1}\left(\frac{1}{1-x}-\frac{1}{x^s}-l\right)=q,
\end{equation}
and $q,s,l$ are defined as in \eqref{e-qsl}.
Further,   equality holds in \eqref{e-maxspecradi}  if and only if $\mathcal{T}=A(m,k,r)$.
\end{theorem}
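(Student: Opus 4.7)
My plan is to show $\mathcal{T}\preceq A(m,k,r)$ for every $\mathcal{T}\in T_{m,k,r}$, which by the observation following the definition of $\preceq$ yields $\rho(\mathcal{T})\leq\rho(A(m,k,r))$, and separately to derive equation~\eqref{10} from the eigenvalue equations for $A(m,k,r)$.

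\textbf{Step 1 (reduction to the $S$-form).} First I would show that for any $\mathcal{T}\in T_{m,k,r}$ there exists $S(a_1,\ldots,a_t)\in T_{m,k,r}$ with $\mathcal{T}\preceq S(a_1,\ldots,a_t)$. Choose a ``center'' vertex $v$ of $\mathcal{T}$ (for example, a vertex attaining the maximum value of the principal eigenvector) and, for each edge of $\mathcal{T}$ at combinatorial distance $\geq 2$ from $v$, relocate it to a pendent position at a core vertex of a distance-one edge, in a way that preserves the matching number. The resulting strict $\prec$-increase can be verified by the edge-deletion/edge-expansion recursions in Theorem~\ref{thm_matchingpoly} together with the partial-hypergraph monotonicity in Lemma~\ref{lem_subgraphmatchpoly}; the choices of relocation are motivated by Lemma~\ref{lem-edgerelease} and Theorem~\ref{lem-edgemoving}.

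\textbf{Step 2 (majorization within $S$-form).} The feasible vectors $(a_1,\ldots,a_t)$ for $S(a_1,\ldots,a_t)\in T_{m,k,r}$ satisfy $\sum_i a_i+t=m$, $0\leq a_i\leq r-1$, and the matching-number condition $\nu(S)\geq k$. I would invoke Lemma~\ref{lem-majorization} to decompose any majorization into elementary moves $(a,b)\mapsto(a+1,b-1)$. Each elementary move is precisely the transformation in Lemma~\ref{lem_Tab-Ta+1b-1} applied with the ``base tree'' $\mathcal{T}$ being the $S$-hypertree minus the two limbs in question, which strictly increases the $\prec$-order; one checks that $\nu$ is preserved (for $b\geq 2$ this is immediate, and for $b=1$ a short case analysis suffices). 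The $\prec$-maximum in the feasible set is therefore the majorization-maximal vector $((r-1)^{(q)},s,0^{(l)})$, namely $A(m,k,r)$.

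\textbf{Step 3 (derivation of~\eqref{10}).} Let $\lambda=\rho(A(m,k,r))$ and set $\alpha_0=1-1/\lambda^r$. By Theorem~\ref{thm:Perron-Frobenius} and the vertex-orbit symmetry of $A(m,k,r)$, the principal eigenvector is constant on each of the orbits: the center $v$, the core vertices of the $q$ full branches, the pendent-edge vertices of the full branches, the pendent-bearing cores of the medium branch, the remaining bare cores of the medium branch, the pendent-edge vertices of the medium branch, and the cores of the $l$ bare branches. At each pendent vertex the eigenvalue equation produces the ratio $1/\lambda$ relative to its adjacent core; iterating up each branch and substituting into the equation at $v$ (with $\lambda^r=1/(1-\alpha_0)$) reduces to
\[
\frac{q}{\alpha_0^{r-1}}+\frac{1}{\alpha_0^s}+l=\frac{1}{1-\alpha_0},
\]
which, upon multiplying through by $\alpha_0^{r-1}$, is exactly~\eqref{10}. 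Hence $\rho(A(m,k,r))=(1/(1-\alpha_0))^{1/r}$.

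\textbf{Main obstacle.} Step 1 is the delicate step. Neither the edge-releasing operation (Lemma~\ref{lem-edgerelease}) nor the edge-moving operation (Theorem~\ref{lem-edgemoving}) preserves the matching number in general — indeed, releasing a non-pendent edge typically collapses a maximum matching. A bespoke transformation, together with a case analysis on whether the relocated edge meets some maximum matching of $\mathcal{T}$ (where the Helly property, Lemma~\ref{lem-subtree-Helly}, controls the intersection pattern of matching edges), is required to keep $\mathcal{T}$ inside $T_{m,k,r}$ throughout the reduction. Steps~2 and~3 are routine once Step~1 is established.
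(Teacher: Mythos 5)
Your Step 1 is where the proof actually lives, and you have left it open: you correctly identify that neither edge-releasing nor edge-moving preserves the matching constraint in general, but the ``bespoke transformation'' you defer to is precisely the missing content, so the proposal has a genuine gap. The paper closes it not by transforming an arbitrary $\mathcal{T}$ into $S$-form while tracking $\nu$, but by taking $\mathcal{T}_0$ extremal in $T_{m,k,r}$, \emph{fixing one} $k$-matching $M=\{e_1,\dots,e_k\}$, and only applying operations that manifestly leave $M$ a matching: (i) if some $e_i\in M$ is non-pendent, edge-release it --- since $M$ is a matching, no other edge of $M$ is adjacent to $e_i$, so the edges of $M$ are untouched and remain pairwise disjoint, while $\rho$ strictly increases (Lemma~\ref{lem-edgerelease}), a contradiction; hence all edges of $M$ are pendent. (ii) If two edges $e,f\notin M$ are disjoint, move $e$ toward $f$ or $f$ toward $e$ along the unique connecting path; the edges of $M$ are again untouched, and Theorem~\ref{lem-edgemoving} forces a strict increase of $\rho$ in one of the two directions, a contradiction. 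Thus $E\setminus M$ is an intersecting family, and the Helly property (Lemma~\ref{lem-subtree-Helly}) gives a common vertex $v$, so $E\setminus M$ is a hyperstar and $\mathcal{T}_0=S(a_1,\dots,a_{m-k},0)$ with $\sum a_i=k-1$ (one further edge-moving argument pins down that exactly one edge of $M$ sits at $v$). The point you missed is that membership in $T_{m,k,r}$ only requires the \emph{existence} of a $k$-matching, so it suffices to protect one chosen $M$ rather than control $\nu(\mathcal{T})$ under the transformation; no case analysis on whether the relocated edge meets a maximum matching is needed.

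Your Steps 2 and 3 are essentially sound. Step 2 is the paper's Claim 3 verbatim (Lemma~\ref{lem-majorization} plus Lemma~\ref{lem_Tab-Ta+1b-1}); note that you do not need the intermediate vectors $\pi_i$ to correspond to members of $T_{m,k,r}$, since the chain of $\prec$-comparisons is only used to bound $\rho(\mathcal{T}_0)$ by $\rho(A(m,k,r))$, so your worry about preserving $\nu$ there is moot. Step 3 takes a genuinely different route from the paper --- you solve the eigenvalue equations directly using the orbit structure of $A(m,k,r)$, whereas the paper expands $\varphi(A(m,k,r),x)$ via Theorem~\ref{thm_matchingpoly}(c) at the center and substitutes $x:=\tfrac{x^r-1}{x^r}$ --- and both yield
\[
\frac{1}{1-\alpha_0}=\frac{q}{\alpha_0^{r-1}}+\frac{1}{\alpha_0^{s}}+l ,
\]
which is \eqref{10}. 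The matching-polynomial route has the advantage of automatically identifying $\rho$ with the \emph{maximum} root, which your eigenvector computation would still need to justify. But none of this rescues Step 1, which must be supplied along the lines above.
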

\begin{proof}
Suppose that $\mathcal{T}_0\in T_{m,k,r}$  has the largest spectral radius. We shall show that $\mathcal{T}_0=A(m,k,r)$. We proceed the proof with the following claims.

\textbf{Claim 1.} Every edge  in  a $k$-matching of $\mathcal{T}_0$  is a  pendent edge.
  \begin{proof}
 Let $M=\{e_1\ldots,e_k\}$ be  a  $k$-matching of $\mathcal{T}_0$. If there exists an edge $e_i\in M$ ($1\leq i\leq k$) which is a non-pendent edge, then applying edge-releasing operation on $e_i$ we get a hypertree $\mathcal{T}'$ with $\rho(\mathcal{T}_0)<\rho(\mathcal{T}')$ by Lemma~\ref{lem-edgerelease}. It is easy to see that $\{e_1\ldots,e_k\}$ is still a  $k$-matching of $\mathcal{T}'$. So $\mathcal{T}'\in T_{m,k,r}$ and  $\rho(\mathcal{T}_0)<\rho(\mathcal{T}')$, contradicting with the maximality of $\mathcal{T}_0$.
\end{proof}

\textbf{Claim 2.} $\mathcal{T}_0=S(a_1,a_2,\ldots,a_{m-k},0)$, where $a_1,a_2,\ldots,a_{m-k}$ are nonnegative integers   with  $k=\sum_{i=1}^{m-k}a_i+1$.
  \begin{proof}
By Claim 1, we assume $M=\{e_1\ldots,e_k\}$ is a $k$-matching of $\mathcal{T}_0$ consisting of pendent edges.
 First we show that  $E\setminus M$  is an intersecting family in $\mathcal{T}_0$.
Suppose to the contrary that there exist two disjoint edges $e, f$ in $E\setminus M$. Since $\mathcal{T}_0$ is a hypertree,  there is a unique path $P$ connecting $e$ and $f$ in $\mathcal{T}_0$, say $P=v_1e_1v_2e_2\cdots,v_ae_av_{a+1}$, where $e_1=e$ and $e_a=f$. Let $\mathcal{T}_1$ and $\mathcal{T}_2$ denote the hypertrees obtained from $\mathcal{T}_0$ by moving edge $e_1$ from $v_2$ to $v_a$ and moving edge $e_a$ from $v_a$ to $v_2$, respectively. It is easy to see that both $\mathcal{T}_1$ and $\mathcal{T}_2$ are hypertrees. Since $e,f\notin M$, $M$ is still a $k$-matching of $\mathcal{T}_1$ and $\mathcal{T}_2$.
 However, by Theorem~\ref{lem-edgemoving}, we have $\rho(\mathcal{T}_0)<\max\{\rho(\mathcal{T}_1),\rho(\mathcal{T}_2)\}$, contradicting  to the maximality of $\mathcal{T}_0$. Thus $E\setminus M$  is an intersecting family in $\mathcal{T}_0$, and further by Lemma~\ref{lem-subtree-Helly}, we know that $E\setminus M$ has a common vertex, say $v$, and this is the  only common vertex since $\mathcal{T}_0$ is linear. Therefore the hypergraph induced by  $E\setminus M$ is a hyperstar, so $\mathcal{T}_0$ can be regarded as a hypertree obtained by attaching $k$ mutually disjoint pendent edges at hyperstar $S_{m-k}^r$.
By   the maximality of $\mathcal{T}_0$ and edge-moving operation in Theorem~\ref{lem-edgemoving}, it is not hard to see that there is exactly one  edge in $M$ attached at $v$. Thus there exist nonnegative integers $a_1,a_2,\ldots,a_{m-k}$ with   $k=\sum_{i=1}^{m-k}a_i+1$ such that
$\mathcal{T}_0=S(a_1,a_2,\ldots,a_{m-k},0)$.
\end{proof}

\textbf{Claim 3.} $\mathcal{T}_0=A(m,k,r)$.
  \begin{proof}

  Note that $S(a_1,a_2,\ldots,a_{m-k},0)=S_m^r=A(m,1,r)$ when $k=1$, and is $S(1,0^{(m-2)})=A(m,2,r)$ when $k=2$. So  we  assume $k\geq3$ in the following discussion.
By Claim 2,  we  assume that $\mathcal{T}_0=S(a_1,a_2,\ldots,a_{m-k},0)$, where $a_1,a_2,\ldots,a_{m-k}$ are nonnegative integers with $k=\sum_{i=1}^{m-k}a_i+1$. Note that  $\pi_0=(a_1,a_2,\ldots,a_{m-k})\in \mathcal{A}_{k-1,m-k}^{r-1}$ and
 $\pi_0\lhd(\underbrace{r-1,\ldots,r-1}_{q}, s,\underbrace{0,\ldots,0}_{l-1})$. By Lemma~\ref{lem-majorization}, there exist   a sequence of  vectors  $\pi_1, \pi_2, \ldots, \pi_p$ such that $\pi_{0}\lhd \pi_{1}\lhd\cdots\lhd\pi_{p}\lhd\pi_{p+1}$, where $\pi_{p+1}=(\underbrace{r-1,\ldots,r-1}_{q}, s,\underbrace{0,\ldots,0}_{l-1})$, $\pi_i$  and $\pi_{i+1}$ differ only in two positions and differ by one for $0\leq i\leq p$. It is easy to verify that $\pi_i\in \mathcal{A}_{k-1,m-k}^{r-1}$ for $i=0,\ldots, p+1$.
  Let $\pi_1=(b_1,b_2,\ldots,b_{m-k})$.  We assume that $\pi_1$ and $\pi_0$ differ by one in $i$th and $j$th components, with $b_i=a_i+1$ and $b_j=a_j-1$, $1\leq i<j\leq m-k$. Let   $\mathcal{H}=S(a_1,\ldots,a_{i-1},a_{i+1},\ldots,a_{j-1},a_{j+1},\ldots,a_{m-k},0)$ with $v$ as its center.  Then  $S(a_1,a_2,\ldots,a_{m-k},0)\cong \mathcal{H}(v; a_i, a_j)$ and $S(b_1,b_2,\ldots,b_{m-k},0)\cong \mathcal{H}(v;a_i+1, a_j-1)$.
By Lemma~\ref{lem_Tab-Ta+1b-1},  we have $ \mathcal{H}(v;a_i, a_j) \prec \mathcal{H}(v;a_i+1, a_j-1)$, i.e.
$$S(a_1,a_2,\ldots,a_{m-k},0) \prec S(b_1,b_2,\ldots,b_{m-k},0).$$

Repeatedly using  the above process, we get
 $S(a_1,a_2,\ldots,a_{m-k},0)\prec S(\underbrace{r-1,\ldots,r-1}_{q}, s,\underbrace{0,\ldots,0}_{l-1})=A(m,k,r)$.
By the maximality of $\mathcal{T}_0$, we have $\mathcal{T}_0=A(m,k,r)$.
\end{proof}

Applying (c) of Lemma~\ref{thm_matchingpoly} to $\mathcal{T}_0=A(m,k,r)$ on the center $v$, we obtain
\begin{align*}
&\varphi(\mathcal{T}_0,x)\\
&=x\varphi(\mathcal{T}_0-v,x)-\sum_{e\in E_v}\varphi(\mathcal{T}_0-V(e),x)\\
&=x^{(l+1)(r-1)-s+1}(x^r-1)^{q(r-1)+s}-qx^{(l+r)(r-1)-s}(x^r-1)^{(q-1)(r-1)+s}\\
&\,\,\,-x^{(l+s)(r-1)}(x^r-1)^{q(r-1)}-lx^{l(r-1)-s}(x^r-1)^{q(r-1)+s}\\
&=x^{(l+r)(r-1)-s}(x^r-1)^{(q-1)(r-1)+s}\left[x^r\left(\frac{x^r-1}{x^r}\right)^{r-1}-\left(\frac{x^r-1}{x^r}\right)^{r-1-s}-l\left(\frac{x^r-1}{x^r}\right)^{r-1}-q \right]
\end{align*}
Since the maximum root of $\varphi(\mathcal{T}_0, x)$ is $\rho(\mathcal{T}_0)$ and $\rho(\mathcal{T}_0)>1$,  $\rho(\mathcal{T}_0)$ is the maximum root of the following  equation
 \[x^r\left(\frac{x^r-1}{x^r}\right)^{r-1}-\left(\frac{x^r-1}{x^r}\right)^{r-1-s}-l\left(\frac{x^r-1}{x^r}\right)^{r-1}-q =0.\]
This  equation can be simplified to (\ref{10})  by variable replacement $x:=\frac{x^r-1}{x^r}$. The proof is completed.
\end{proof}

Theorem~\ref{thm_maximum-radius-matchingnumber} in this paper generalizes Theorem 3.3 in \cite{HouLi02} from trees to hypertrees.

When hypertree $\mathcal{T}$ has a perfect matching, we have the following result.
\begin{coro}
Let $\mathcal{T}$ be an $r$-uniform hypertree  with $m$ edges and a  perfect matching. Then
\begin{equation}\label{e-perfect-matching}
\rho(\mathcal{T})\leq \left(\frac{1}{1-\alpha_0}\right)^{1/r},
\end{equation}
where  $\alpha_0$ is the maximum root of
\[
rx^{r}=(m-1)(1-x).
\]
 Further, equality holds in \eqref{e-perfect-matching} if and only if $\mathcal{T}=A(\frac{rk-1}{r-1},k,r)$.
\end{coro}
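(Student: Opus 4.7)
The plan is to read the corollary off Theorem~\ref{thm_maximum-radius-matchingnumber} after specializing the parameters $q,s,l$ of \eqref{e-qsl} to the perfect-matching regime. First I would invoke the elementary identity $|V(\mathcal{T})| = m(r-1)+1$ for a connected linear $r$-uniform hypertree with $m$ edges (immediate by induction on $m$, since each new edge attaches at a single vertex and introduces $r-1$ fresh ones). A perfect matching of size $k$ covers every vertex exactly once, so $kr = m(r-1)+1$, which is equivalent to $m = (rk-1)/(r-1)$ and makes $(k-1)/(r-1) = (m-1)/r$ a positive integer.

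Next I would extract the parameters $(q,s,l)$ for the perfect-matching case. The unique division $k-1 = (r-1)q + s$ with $0 \le s < r-1$, combined with the integrality just noted, forces $s = 0$ and $q = (k-1)/(r-1) = (m-1)/r$. Substituting into $m = qr + s + 1 + l$ yields $l = 0$. In particular the extremal hypertree identified in Theorem~\ref{thm_maximum-radius-matchingnumber} is $A(m,k,r) = S((r-1)^{(q)},0) = A((rk-1)/(r-1),\, k,\, r)$, matching the equality case stated in the corollary.

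To finish, I would plug $s = 0$ and $l = 0$ into equation \eqref{10}: the bracketed expression reduces to $\tfrac{1}{1-x} - 1 = \tfrac{x}{1-x}$, so the equation becomes $x^{r-1} \cdot \tfrac{x}{1-x} = q$, i.e.\ $\tfrac{x^r}{1-x} = \tfrac{m-1}{r}$. Clearing denominators gives precisely $rx^r = (m-1)(1-x)$, and the bound $\rho(\mathcal{T}) \le (1/(1-\alpha_0))^{1/r}$ then transfers verbatim from Theorem~\ref{thm_maximum-radius-matchingnumber}, together with the characterization of equality. The argument is essentially bookkeeping; the only subtlety worth checking is that existence of a perfect matching indeed forces $s = l = 0$, so that \eqref{10} collapses to the clean polynomial equation appearing in the statement, rather than to some mixed form.
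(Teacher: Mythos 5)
Your proposal is correct and is exactly the intended derivation: the paper states this corollary as an immediate specialization of Theorem~\ref{thm_maximum-radius-matchingnumber}, and your computation that a perfect matching forces $kr=m(r-1)+1$, hence $s=l=0$ and $q=(m-1)/r$, so that \eqref{10} collapses to $rx^r=(m-1)(1-x)$, is precisely the bookkeeping the authors leave to the reader. The only negligible quibble is that $q=(m-1)/r$ is a nonnegative (not necessarily positive) integer, since $m=1$ is allowed.
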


\end{document}